\newtheorem{pro}{Proposition}[section]
\newtheorem{thm}[pro]{Theorem}
\newtheorem{lem}[pro]{Lemma}
\newtheorem{cor}[pro]{Corollary}
\theoremstyle{definition}
\newtheorem{dfn}[pro]{Definition}
\newtheorem{ex}[pro]{Examples}
\newtheorem{rmk}[pro]{Remark}
\theoremstyle{remark}
\newcommand{\bdy}{\partial}
\newcommand{\thick}[1]{{\rm Thick}(#1)}
\newcommand{\thin}[1]{{\rm Thin}(#1)}
\title[Heegaard structure respects JSJ decompositions]{Heegaard structure respects complicated JSJ decompositions}
\date{\today}
\address{Pitzer College}
\email{bachman@pitzer.edu}
\author{David Bachman}
\begin{document}

\address{Quest University Canada}
\email{rdt@questu.ca}
\author{Ryan Derby-Talbot}

\address{DePaul University}
\email{esedgwick@cdm.depaul.edu}
\author{Eric Sedgwick}

\begin{abstract}
Let $M$ be a 3-manifold with torus boundary components $T_1$ and $T_2$. Let $\phi \colon T_1 \to T_2$ be a homeomorphism, $M_\phi$ the manifold obtained from $M$ by gluing $T_1$ to $T_2$ via the map $\phi$, and $T$ the image of $T_1$ in $M_\phi$. We show that if $\phi$ is ``sufficiently complicated" then any incompressible or strongly irreducible surface in $M_\phi$ can be isotoped to be disjoint from $T$. It follows that every Heegaard splitting of a 3-manifold admitting a ``sufficiently complicated'' JSJ decomposition is an amalgamation of Heegaard splittings of the components of the JSJ decomposition.
\end{abstract}

\maketitle

\section{Introduction}
\label{s:intro}

Let $M$ be a (possibly disconnected) 3-manifold with homeomorphic boundary components $F_1$ and $F_2$. Let $\phi \colon F_1 \to F_2$ be a homeomorphism, and let $M_\phi$ be the manifold obtained from $M$ by gluing $F_1$ to $F_2$ via the map $\phi$. Finally, let $F$ denote the image of $F_1$ in $M_\phi$. So-called incompressible and strongly irreducible surfaces have been seen to be vital to the study of 3-manifolds. A natural question is how such surfaces in $M_\phi$ are related to surfaces in $M$. In many cases it is known that with some assumption on the map $\phi$, a class of such surfaces in $M_{\phi}$ can be isotoped to be disjoint from $F$. In other words, each surface in the class is isotopic into the original manifold $M$. When this conclusion follows, then we say $F$ is a {\it barrier} for the class.

If $F$ is separating and its genus is greater than one, then Lackenby \cite{Lackenby2004} and Souto \cite{Souto} have independently shown that if $\phi$ is ``sufficiently complicated" (in different contexts), then $F$ is a barrier for all incompressible and strongly irreducible surfaces of bounded genus. Li \cite{Li} also obtained a similar result, including the case that $F$ is a (possibly non-separating) torus. In some cases it is even known that $F$ is a barrier for all incompressible and strongly irreducible surfaces {\it regardless of their genus}. For example, this follows from \cite{Bachman2006} when $F$ is a separating torus in $M_\phi$.

In this paper we look at the situation when the gluing surface is a torus $T$ in $M_\phi$ that may be non-separating. Our main result is that the same conclusion holds: when $\phi$ is ``sufficiently complicated" the surface $T$ is a barrier for all incompressible and strongly irreducible surfaces, regardless of their genus. A consequence of this is that all Heegaard splittings of $M_\phi$ arise from Heegaard splittings of $M$ in a natural way.

To prove our main result, the first challenge is to construct a reasonable definition of the term ``sufficiently complicated" that depends only on the manifold $M$. This is accomplished in Section \ref{s:Normal-like}, where we construct a complexity for gluing maps between two torus boundary components of a 3-manifold called the {\it $c$-distance}, where $c$ depends on the class of surfaces under consideration. We then establish our first theorem:

\begin{thm}
\label{main_thm1}
Let $M$ be a compact, orientable, irreducible 3-manifold with incompressible boundary and let $T_1$ and $T_2$  be two torus components of $\partial M$. Let $\phi \colon T_1 \to T_2$ be a gluing map whose 1-distance is at least two. Then every closed, orientable, incompressible and strongly irreducible surface in $M_\phi$ can be isotoped to be disjoint from the image of $T_1$ in $M_\phi$.
\end{thm}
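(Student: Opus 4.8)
The plan is to argue by contradiction. Suppose $S\subset M_\phi$ is closed, orientable, and either incompressible or strongly irreducible, and that no isotopic copy of $S$ is disjoint from $T$. I would first record two routine facts, each proved by innermost-disk and innermost-circle surgeries using that $M$ is irreducible with incompressible boundary: $M_\phi$ is irreducible, and $T$ is incompressible in $M_\phi$. Then, among all isotopic copies of $S$, I would choose one that is normal-like with respect to the handle structure on $M_\phi$ induced by the one on $M$ underlying the definition of the $1$-distance in Section~\ref{s:Normal-like} (normal when $S$ is incompressible, almost normal when $S$ is strongly irreducible), and, subject to that, meeting $T$ in as few components as possible; if $S\cap T=\emptyset$ there is nothing to prove, so assume otherwise.

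Next I would reduce to the case that every curve of $S\cap T$ is essential in $T$. A curve of $S\cap T$ bounding a disk in $T$, taken innermost in $T$, either also bounds a disk in $S$, in which case it (and possibly more of $S\cap T$) can be removed by an isotopy of $S$ across a ball supplied by irreducibility of $M_\phi$, contradicting minimality; or it is essential in $S$, in which case the innermost disk in $T$ is a compressing disk for $S$ lying entirely on one side of $S$. The latter is immediately absurd when $S$ is incompressible, and when $S$ is strongly irreducible such disks on both sides of $S$ would contradict strong irreducibility, while the case where they all lie on one side is disposed of as in \cite{Bachman2006}. Being essential and pairwise disjoint in the torus $T$, the curves of $S\cap T$ are then all parallel, of a single slope $\sigma$. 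Cutting $S$ along them yields $S_M=S\cap M$, a properly embedded surface in $M$ whose boundary consists of parallel essential curves of some slope $\alpha$ on $T_1$ and some slope $\beta$ on $T_2$, and by construction $\phi(\alpha)=\beta$.

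The heart of the argument is to verify that $S_M$ is a normal-like surface in $M$ of the complexity that the $1$-distance is designed to detect: that every component is incompressible and $\partial$-incompressible in $M$, with at most one exceptional component, which is then strongly irreducible and occurs only in the strongly irreducible case, and with no $\partial$-parallel annulus components. Incompressibility holds because, the curves of $S\cap T$ being essential in $S$, a compressing disk for $S_M$ is a compressing disk for $S$; in the strongly irreducible case the fact that compressing disks on opposite sides of $S$ must intersect confines the failure of incompressibility to a single piece. The $\partial$-incompressibility of $S_M$, and the absence of $\partial$-parallel annulus components, follow from minimality of $|S\cap T|$: a $\partial$-compression, or a $\partial$-parallel annulus, would allow one to push $S$ across $T$ and reduce $|S\cap T|$, and here no inessential arc of $T$ can obstruct this because an arc in an annulus with both endpoints on a single boundary circle is always $\partial$-parallel.

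Finally, $S_M$ realizes, among the boundary slopes of normal-like surfaces of complexity at most $1$ in $M$, the slope $\alpha$ on $T_1$ and the slope $\beta$ on $T_2$, with $\phi(\alpha)=\beta$; by the definition of the $1$-distance in Section~\ref{s:Normal-like} this forces the $1$-distance of $\phi$ to be at most one, contrary to hypothesis. Hence some isotopic copy of $S$ is disjoint from $T$, and it then lies in $M\subset M_\phi$, as claimed. I expect the crux to be the third paragraph together with the ``essential in $T$'' reduction preceding it --- the controlled passage from the surface $S$ in $M_\phi$ to a surface $S_M$ in $M$ --- and, within it, the strongly irreducible case: one must show that cutting $S$ along $T$ leaves at most one non-incompressible piece which remains strongly irreducible, and must handle curves of $S\cap T$ that are inessential in $T$ without any compressibility hypothesis on $S$, both of which require the delicate ``which side of $S$'' bookkeeping characteristic of strongly irreducible surfaces (compare \cite{Bachman2006}). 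A secondary issue is the interface with Section~\ref{s:Normal-like}: matching the normal-like complexity of $S_M$ as a surface in $M$ to the parameter $c=1$ in the $1$-distance, and checking that the normal form chosen for $S$ in $M_\phi$ restricts to the normal form in $M$ for the handle structure defining the distance.
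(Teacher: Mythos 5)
Your overall plan --- isotope $S$ to meet $T$ well, cut along $T$, argue the pieces of $S\cap M$ are normal-like of complexity at most one, then invoke the $\Phi_{\mathcal C}$ maps from Section~\ref{s:Normal-like} to get a contradiction with the distance hypothesis --- is the same architecture as the paper's proof (which uses Lemmas~\ref{l:IncompressibleTorusIntersection}, \ref{l:IncompImpliesBdyIncomp}, Corollary~\ref{c:StronglyIrreducibleSweepout}, and Lemma~\ref{l:StronglyIrreducibleImpliesDistanceOne}). The incompressible case is essentially right. But there is a genuine gap in the strongly irreducible case, and it is precisely the place you yourself flag as ``the crux.''

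The gap: you claim that every component of $S_M = S\cap M$ is $\partial$-incompressible, with at most one exceptional strongly irreducible component, and that this puts $S_M$ into complexity-at-most-one normal form. But for the exceptional strongly irreducible piece you need \emph{$\partial$-strong irreducibility}, not just strong irreducibility, to invoke the normalization results (the complexity-$\le 1$ normal form for non-incompressible pieces comes from \cite{Bachman98} and applies to $\partial$-strongly irreducible surfaces). Strong irreducibility is a condition on compressing disks only; it says nothing about $\partial$-compressing disks. Your ``minimize $|S\cap T|$'' argument shows that a $\partial$-compression whose arc connects two distinct boundary circles would reduce $|S\cap T|$, and that one whose arc connects a circle to itself yields a compressing disk for $S$ --- but a compressing disk for the strongly irreducible piece is not a contradiction (such pieces have compressing disks on both sides). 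So you have not ruled out $\partial$-compressions of the strongly irreducible piece, hence have not established the hypotheses needed to get it into complexity $\le 1$ normal form with its \emph{given} boundary slope. The paper resolves this with Lemma~\ref{l:StronglyIrreducibleImpliesDistanceOne}: either the strongly irreducible piece is in fact $\partial$-strongly irreducible, \emph{or} there is some other incompressible, $\partial$-incompressible surface whose boundary slope is within Farey distance one of $\partial S''$. In the second alternative you are forced to swap $S''$ for a nearby surface and accept a slope drift of one; this is exactly why the hypothesis is $1$-distance at least two, not at least one. Your argument, as written, never produces this drift and would appear to prove the theorem with $1$-distance $\ge 1$, which is a tell that a case has been missed. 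You need to either prove an analogue of Lemma~\ref{l:StronglyIrreducibleImpliesDistanceOne} or cite it, and then carry the ``at most distance one'' through to the final contradiction.

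A secondary (smaller) point: in reducing to curves of $S\cap T$ essential in $T$, your treatment of the strongly irreducible case (``disposed of as in \cite{Bachman2006}'') is where the real work of Lemma~\ref{l:StronglyIrreducibleSweepout} and Corollary~\ref{c:StronglyIrreducibleSweepout} lives; the paper gets this from a sweep-out argument, not from minimality of $|S\cap T|$, because a naive minimal-intersection position need not make the pieces incompressible on both sides of $T$ for a strongly irreducible $S$. Simply minimizing $|S\cap T|$ does not rule out the third alternative (a saddle tangency) without the torus-specific argument in Corollary~\ref{c:StronglyIrreducibleSweepout}.
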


That is, a sufficiently complicated gluing map between torus boundary components of a 3-manifold creates a barrier to all incompressible and strongly irreducible surfaces. The most interesting case of Theorem \ref{main_thm1} is when $M$ is connected, since similar results were previously obtained in \cite{Bachman2006} for the case that $M$ is disconnected. In the special case when $M \cong T^2 \times I$ this follows from \cite{Cooper1999}.

The proof of Theorem~\ref{main_thm1} is given in Section~\ref{s:IntersectingWithTori}, considering the incompressible and strongly irreducible cases separately. In Section~\ref{s:selfamalgamation} we review the definitions of {\it generalized Heegaard splitting} and {\it amalgation}, and then extend the definition of $c$-distance to gluings along arbitrarily many tori. This allows us to put everything together to prove the following theorem about Heegaard splittings of certain 3-manifolds with non-trivial JSJ decompositions (see Section~\ref{s:selfamalgamation} for relevant definitions):

\begin{thm}
\label{JSJtheorem}
Let $M$ be a closed, irreducible, orientable 3-manifold that admits a JSJ decomposition whose collection of JSJ tori has $1$-distance at least 2. Then every Heegaard splitting of $M$ is an amalgamation of Heegaard splittings of the components of the JSJ decomposition of $M$ over the gluing maps.
\end{thm}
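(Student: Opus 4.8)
The plan is to reduce the theorem to Theorem~\ref{main_thm1}, via the theory of generalized Heegaard splittings and amalgamations reviewed in Section~\ref{s:selfamalgamation} (following \cite{gordon}). Let $\Sigma$ be a Heegaard surface for $M$, and let $T\subset M$ be the union of the JSJ tori, so that $M$ cut along $T$ is the disjoint union of the JSJ components $M_1,\dots,M_n$ and $M$ is recovered from $\sqcup_i M_i$ by regluing along the gluing maps $\phi$. By the untelescoping procedure reviewed in Section~\ref{s:selfamalgamation}, $\Sigma$ is the amalgamation of a generalized Heegaard splitting $(\VV,\WW)$ of $M$ whose thin surfaces $\WW$ are incompressible and whose thick surfaces $\VV$ are strongly irreducible Heegaard surfaces of the respective components of $M$ cut along $\WW$. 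Since $M$ is irreducible we may take $\WW$ to have no $2$-sphere components.

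The heart of the argument is to show that $T$ is a \emph{barrier} for the entire generalized Heegaard splitting, i.e.\ that all of the surfaces of $(\VV,\WW)$ can simultaneously be isotoped off $T$. First, each component of $\WW$ is incompressible in $M$; applying the incompressible case of (the multi-torus extension of) Theorem~\ref{main_thm1} to the components of $\WW$ in turn, and using that they are pairwise disjoint, we isotope $\WW$ to be disjoint from $T$. Next, fix a thick surface $V\in\VV$ and let $N$ be the component of $M$ cut along $\WW$ containing it. Then $N$ is compact, orientable and irreducible with incompressible boundary, $V$ is a strongly irreducible Heegaard surface for $N$, and $T\cap N$ is a nonempty collection of incompressible tori exhibiting $N$ as a self-gluing of $N$ cut along $T\cap N$, still of $1$-distance at least $2$. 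Theorem~\ref{main_thm1} (again in its multi-torus form) then lets us isotope $V$ off $T\cap N$. Carrying this out for every thick surface, with the isotopies supported in the complementary regions of $\WW$ and hence compatible, we obtain a generalized Heegaard splitting of $M$ disjoint from $T$.

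Once $(\VV,\WW)$ is disjoint from $T$, cutting along $T$ restricts it to a generalized Heegaard splitting of each JSJ component $M_i$ (the adjacent copies of the JSJ tori being thrown in with the thin surfaces), and amalgamating this piece produces a Heegaard splitting $\Sigma_i$ of $M_i$. Since amalgamation may be performed in stages (see \cite{gordon}) and $T$ is disjoint from every surface of $(\VV,\WW)$, amalgamating the $\Sigma_i$ over the gluing maps $\phi$ — i.e.\ first within each $M_i$ and then across $T$ — recovers the amalgamation of $(\VV,\WW)$, which is $\Sigma$. Thus $\Sigma$ is an amalgamation of Heegaard splittings of the components of the JSJ decomposition of $M$ over the gluing maps.

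The main obstacle is the second paragraph. Applying Theorem~\ref{main_thm1} inside a complementary region $N$ of $\WW$ requires knowing that the self-gluing of $N$ along $T\cap N$ is still ``sufficiently complicated'', i.e.\ that the $1$-distance — which is defined in terms of the ambient manifold in Section~\ref{s:Normal-like} — does not drop below $2$ when one passes from $M$ to the pieces obtained by cutting along the incompressible surfaces $\WW$. Establishing this behaviour of $c$-distance under such restrictions, together with arranging the isotopies of the various surfaces of $(\VV,\WW)$ so that they are simultaneously realizable, is the technical core of the argument; the remaining points (eliminating sphere and trivial thin components, and the bookkeeping required when a JSJ torus is non-separating in $M$) are routine.
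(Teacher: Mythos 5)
Your plan of ``untelescope, push the GHS off the JSJ tori, then cut and reamalgamate'' is a natural one, and you correctly flagged the gap that sinks it. Both of the places where you invoke Theorem~\ref{main_thm1} are problematic for the same underlying reason: the $c$-distance of a gluing (Definition~\ref{d:c-distance}) is defined relative to a triangulation of a \emph{specific} manifold, and there is simply no transfer principle. To push $\WW$ off $T$ all at once you would need a multi-torus version of Theorem~\ref{main_thm1}, which the paper never proves; and to push a thick level $V$ off $T\cap N$ you apply Theorem~\ref{main_thm1} inside $N$, a piece of $M$ cut along the (a priori arbitrary) incompressible surfaces $\WW$ coming from untelescoping $\Sigma$. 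The manifold $N\setminus T$ is not one of the $M_i$ appearing in Definition~\ref{d:Generalizedc-distance}, so the hypothesis that the relevant gluings have $1$-distance at least $2$ in $N$ is not part of what was assumed, and nothing in the paper shows that $c$-distance is preserved under cutting along incompressible surfaces. As you say, this is the technical core, and your proposal leaves it open.

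The paper sidesteps this entirely by inducting one torus at a time rather than trying to make $T$ a barrier for the whole GHS at once. Fix an ordering $\{T_i\}_{i=1}^n$ of the JSJ tori realizing the $1$-distance of $\mathbf T$, with associated partially-reglued manifolds $M_0,\dots,M_n=M$ as in Definition~\ref{d:Generalizedc-distance}. Let $m$ be the least index such that $H$ is an amalgamation of Heegaard surfaces of the components of $M_m$ over $\{\phi_i\}_{i>m}$. If $m>0$, write that Heegaard surface of $M_m$ as $H_m$, and by Lemma~\ref{l:StronglyIrreducibleAmalgamation} realize it as the amalgamation of a strongly irreducible GHS $\mathcal H_m$ of $M_m$. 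Now the crucial move: $M_m$ is obtained from $M_{m-1}$ by a single gluing $\phi_m$ of $1$-distance at least $2$ \emph{with respect to a triangulation of $M_{m-1}$} --- precisely what Definition~\ref{d:Generalizedc-distance} guarantees --- so Corollary~\ref{c:descendendant} applies to $M_{m-1}$, $\phi_m$ and $\mathcal H_m$ and shows $T_m$ is isotopic to a \emph{thin level} of $\mathcal H_m$. (Corollary~\ref{c:descendendant} is where Theorem~\ref{main_thm1} is actually used --- once on the incompressible thin levels, once on the strongly irreducible thick level containing $T_m$ --- but always in the ambient $M_m$ relative to $M_{m-1}$, never in a manifold cut along thin surfaces, so the $1$-distance hypothesis is available.) Splitting the thin level $T_m$ into $T_m',T_m''$ produces a strongly irreducible GHS of $M_{m-1}$, whose amalgamation $H_{m-1}$ amalgamates over $\{\phi_i\}_{i\ge m}$ to $H$, contradicting minimality of $m$. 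So $m=0$.

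The conceptual difference is that the paper proves the stronger statement that $T_m$ \emph{is} a thin level (not merely disjoint from the GHS), and Definition~\ref{d:Generalizedc-distance} is engineered exactly so that the $1$-distance hypothesis is in force at every step of the peeling. If you want to repair your version, you would essentially have to reorganize it into this one-torus-at-a-time induction; trying to control $c$-distance under cutting along the thin surfaces of an arbitrary GHS is not something the machinery of Section~\ref{s:Normal-like} supports.
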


Theorem~\ref{JSJtheorem} greatly simplifies the Heegaard structure of 3-manifolds that are ``sufficiently complicated'' in the above sense. For example, despite being toroidal, such manifolds admit only finitely many Heegaard splittings of any given genus up to isotopy (see Corollary~\ref{c:finiteness}). 

In the proof of Theorem \ref{main_thm1} we show that $\phi$ need only to have $1$-distance one to create a barrier to incompressible surfaces. In Section \ref{s:MainTheoremSection} we address the question of the existence of $c$-distance one gluing maps. There we show the following:

\begin{thm}
\label{main_thm2}
Let $M$ be a compact, orientable, irreducible 3-manifold with incompressible boundary and let $T_1$ and $T_2$  be two torus components of $\partial M$. Let $\psi \colon T_1 \to T_2$ and $\sigma \colon T_2 \to T_2$ be homeomorphisms, where $\sigma$ is Anosov. Then for each $c$ there exists a positive integer $N$ such that for $n \geq N$, the map $\sigma^n \psi$ has $c$-distance at least one.
\end{thm}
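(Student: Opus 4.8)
The plan is to reduce the statement to the elementary dynamics of a hyperbolic isometry acting on the Farey graph. Recall from Section~\ref{s:Normal-like} that, for a fixed $c$, the $c$-distance of a gluing map $\phi\colon T_1\to T_2$ is controlled from below by the Farey-graph distance between the image under $\phi$ of a certain finite set of slopes on $T_1$ and a certain finite set of slopes on $T_2$ — the slopes arising as boundary curves of the normal-like surfaces of bounded complexity in $M$ — up to an additive constant $k_0$ depending only on $c$. Write $\mathcal F$ for the Farey graph, i.e.\ the curve graph of the torus, with vertex set the slopes and graph metric $d_{\mathcal F}$, let $A_0\subset\mathcal F^{(0)}$ be $\psi_\ast$ of the relevant finite slope set on $T_1$ (so $A_0$ is a finite set of slopes on $T_2$), and let $B_0\subset\mathcal F^{(0)}$ be the relevant finite slope set on $T_2$. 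If either set is empty there is nothing to prove, so assume both are nonempty. Since $\sigma^n\psi$ induces on slopes the map $a\mapsto\sigma^n_\ast(\psi_\ast(a))$, it suffices to produce $N$ so that for all $n\ge N$,
\[
\min_{a\in A_0,\ b\in B_0} d_{\mathcal F}\bigl(\sigma^n_\ast(a),\,b\bigr) \;>\; k_0 .
\]

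The key input is that an Anosov map acts on $\mathcal F$ as a hyperbolic isometry with positive stable translation length. Indeed, $\sigma$ is carried by a hyperbolic element of $SL_2(\mathbb Z)$, whose action on the circle of slopes $\mathbb Q\cup\{\infty\}\subset S^1$ is M\"obius with two fixed points, both quadratic irrationals and hence not vertices of $\mathcal F$; calling the attracting one $\alpha^+$, the orbit $\{\sigma^n_\ast(x)\}$ of any vertex $x$ consists of distinct vertices converging in $S^1$ to $\alpha^+$. Since the closure in $S^1$ of any finite-radius ball of $\mathcal F$ consists of rationals of bounded continued-fraction length, it misses $\alpha^+$; therefore the orbit $\sigma^n_\ast(x)$ eventually leaves every ball, i.e.\ $d_{\mathcal F}(\sigma^n_\ast(x),x)\to\infty$ as $n\to\infty$. (Equivalently, this is the standard fact that a pseudo-Anosov acts loxodromically on the curve complex, here in the classical torus case.)

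Now the finiteness of $A_0$ and $B_0$ closes the argument. For each of the finitely many $a\in A_0$ we have $d_{\mathcal F}(\sigma^n_\ast(a),a)\to\infty$, hence $\min_{a\in A_0}d_{\mathcal F}(\sigma^n_\ast(a),a)\to\infty$, while for all $a\in A_0$, $b\in B_0$,
\[
d_{\mathcal F}\bigl(\sigma^n_\ast(a),b\bigr)\;\ge\;d_{\mathcal F}\bigl(\sigma^n_\ast(a),a\bigr)-d_{\mathcal F}(a,b)\;\ge\;\min_{a'\in A_0} d_{\mathcal F}\bigl(\sigma^n_\ast(a'),a'\bigr)-\max_{a'\in A_0,\ b'\in B_0} d_{\mathcal F}(a',b').
\]
The subtracted quantity is a constant because $A_0,B_0$ are finite, so the left-hand minimum tends to infinity with $n$; picking $N$ with this minimum exceeding $k_0$ for $n\ge N$ gives the theorem. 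The only genuine obstacle is bookkeeping: extracting from the definition in Section~\ref{s:Normal-like} the precise finite slope sets and the constant $k_0$, so as to justify the reduction in the first paragraph. Once that translation is made, the dynamical content — a hyperbolic mapping class of the torus moves every slope arbitrarily far in the Farey metric under iteration — is immediate.
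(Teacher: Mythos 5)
Your argument has a genuine gap, located in the very first reduction. You assert that the $c$-distance of $\phi$ is ``controlled from below by the Farey-graph distance between the image under $\phi$ of a certain finite set of slopes on $T_1$ and a certain finite set of slopes on $T_2$ \dots\ up to an additive constant $k_0$.'' That is not what Definition~\ref{d:c-distance} says. The $c$-distance of $\phi$ is the minimum, over the finitely many typed compatibility classes $\mathcal C$, of the \emph{minimal displacement} of the isometry $\phi\,\Phi_{\mathcal C}$ of the Farey graph, i.e.\ the minimum over \emph{all} slopes $\gamma$ on $T_2$ of $d_{\mathcal F}\bigl(\gamma, \phi\Phi_{\mathcal C}(\gamma)\bigr)$. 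So ``$c$-distance $\ge 1$'' means precisely that each $\sigma^n\psi\,\Phi_{\mathcal C}\in SL_2(\mathbb Q)$ has \emph{no rational eigenslope at all}. Your reduction to a pair of fixed finite slope sets $A_0, B_0$ does not capture this: showing $\sigma^n_*(a)$ drifts away from every $b\in B_0$ says nothing about whether, for each $n$, there is some rational slope $\gamma$ (which may depend on $n$) fixed by $\sigma^n\psi\,\Phi_{\mathcal C}$. Indeed a parabolic element of $SL_2(\mathbb Z)$ with rational fixed point has minimal displacement $0$ in the Farey graph even though the orbit of any individual slope under its iterates is unbounded; the minimal displacement and the orbit growth of finitely many points are genuinely different quantities.

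The dynamical input you invoke --- that $\sigma$ acts loxodromically on the Farey graph --- is true and relevant, but it is not sufficient by itself; the paper's proof uses it only to control the growth of $\mathrm{trace}(\sigma^n\psi\,\Phi_{\mathcal C})$. What makes the argument close is an \emph{arithmetic} ingredient that your proposal omits: $\sigma^n\psi\,\Phi_{\mathcal C}$ has denominator bounded independently of $n$ (since $\sigma^n\in SL_2(\mathbb Z)$, its denominator is at most $d(\psi\Phi_{\mathcal C})$). Lemma~\ref{l:eigenslope} shows that any $L\in SL_2(\mathbb Q)$ with a rational eigenslope must satisfy $2/d(L)\le |\mathrm{trace}(L)| \le 2d(L)$, because a rational eigenvalue $q$ of $L$ necessarily obeys $1/d(L)\le |q|\le d(L)$. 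Then Lemma~\ref{fixed_point_lem} diagonalizes $\sigma$ and shows $\mathrm{trace}(\sigma^n\psi\Phi_{\mathcal C})$ either blows up or decays to zero, in either case escaping the interval $[2/d, 2d]$ for $n$ large, whence no rational eigenslope. Without the bounded-denominator observation, trace growth alone cannot rule out rational eigenslopes (e.g.\ $\mathrm{diag}(m,1/m)$ has arbitrarily large trace and a rational eigenslope for every $m$). So the ``bookkeeping'' you defer at the end is exactly where the missing idea lives; the reduction in your first paragraph is not a bound that follows from Definition~\ref{d:c-distance}, and the metric argument that follows cannot be repaired without bringing in the denominator estimate.
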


As above, the most interesting case is when $M$ is connected since otherwise this result is implied by \cite{Hatcher}. Theorems \ref{main_thm1} and \ref{main_thm2} imply that the composition of any gluing map between two torus boundary components of some 3-manifold with a sufficiently high power of an Anosov map creates a barrier to all incompressible surfaces.  We expect a similar result holds for strongly irreducible surfaces as well.

\section{Boundary slopes of surfaces that meet the 2-skeleton normally}
\label{s:Normal-like}

Let $M$ be a compact, orientable, irreducible, triangulated 3-manifold consisting of one or two components\footnote{The results for this section in the case that $M$ has two components have been obtained previously in \cite{Hatcher} and \cite{Jaco2003}. While our focus is therefore on the case that $M$ is connected, the results we establish here are general and include these previously obtained results in our own terminology, which we provide for completeness and ease of exposition in later sections.}. Presently we shall define what it means for a surface to meet the 2-skeleton of the triangulation normally. Such surfaces can then be broken up into {\it compatibility classes} with certain properties (see Definitions~\ref{d:compatibilityclass} and \ref{d:typed}). Suppose $T_1$ and $T_2$ are torus components of $\bdy M$. The main result of this section is an extension of Hatcher's theorem \cite{Hatcher}, \cite{Floyd-Oertel} from slopes of essential surfaces to slopes of surfaces which meet the 2-skeleton of a triangulation normally.  Namely, we show that each compatibility class $\mathcal C$ determines an element $\Phi_{\mathcal C}$ of $SL_2(\mathbb Q)$ such that for each $S \in \mathcal C$, $\Phi_{\mathcal C}(\langle S \cap T_2\rangle)= \langle S \cap T_1\rangle$, where $\langle S \cap T_i \rangle$ is the slope of $S \cap T_i$. These maps will allow us to define the complexity for gluing maps $\phi:T_1 \to T_2$ referred to in Section~\ref{s:intro}.

Much of the terminology for this section is based on that of Jaco and Sedgwick \cite{Jaco2003}. Also, the reader is referred to \cite{JacoTollefson} for a more elementary introduction to normal surface theory.

Fix a triangulation $\mathcal T$ of $M$ that restricts to one-vertex triangulations on $T_1$ and $T_2$. Such triangulations exist by \cite{Jaco2003}.

\begin{dfn}
A properly embedded arc in a 2-simplex of $\mathcal T$ is {\it normal} if it connects distinct 1-simplices. A loop on the boundary of a tetrahedron is normal if it consists of normal arcs. A {\it normal isotopy} of such a loop is an isotopy that restricts to an isotopy in each boundary simplex of the tetrahedron.  It is well known that each normal loop consists of 3 or $4n$ normal arcs. The {\it complexity} of a normal loop which consists of 3 arcs is zero, and of a normal loop that consists of $4n$ arcs is $n-1$.
\end{dfn}

\begin{dfn}
A surface $S$ in $M$ is {\em normal with respect to the 2-skeleton} if it meets the boundary of every tetrahedron in a collection of normal loops. The {\it complexity} of such a surface is the maximum complexity of all such normal loops.
\end{dfn}

\begin{ex}\
	\begin{enumerate}
		\item Haken \cite{Haken} showed that surfaces that are both incompressible and $\bdy$-incompressible can be isotoped to be normal; i.e., to be complexity 0 surfaces, and in addition to meet each tetrahedron in disks.
		\item It follows from work of Rubinstein \cite{Rubinstein93} and Stocking \cite{Stocking96} that Heegaard surfaces may be isotoped to be complexity at most 1, provided they are {\it strongly irreducible}. This condition means that any compressing disk on one side of such a Heegaard surface meets every compressing disk on the other, and that there is at least one such disk on each side.
		\item It follows from \cite{BDTS2012} that strongly irreducible surfaces that are also {\it $\bdy$-strongly irreducible} can be isotoped to be complexity 1. Such surfaces have the additional property that any compressing or $\bdy$-compressing disk on one side meets every compressing and $\bdy$-compressing disk on the other. 
	\end{enumerate}
\end{ex}

\begin{dfn}
Two surfaces $F$ and $G$ that are normal with respect to the 2-skeleton are {\em compatible} if on the boundary of each tetrahedron $\Delta$, the normal loops $F \cap \bdy \Delta$ and $G \cap \bdy \Delta$ can be normally isotoped to be disjoint.
\end{dfn}

It is well known that if $F$ and $G$ are compatible, and $\alpha$ and $\beta$ are components of $F \cap \bdy \Delta$ and $G \cap \bdy \Delta$, respectively, of length larger than 3, then $\alpha$ and $\beta$ are normally isotopic.

\begin{dfn}
\label{d:compatibilityclass}
A maximal (with respect to inclusion) set of surfaces that are normal with respect to the 2-skeleton and pairwise compatible forms a {\it compatibility class}.
\end{dfn}

A choice of a normal loop of length larger than 3 on the boundary of each tetrahedron thus determines at most one compatibility class.

\begin{dfn}
The {\it complexity} of a non-empty compatibility class is the maximum of the complexities of the surfaces contained in the class.
\end{dfn}

\begin{figure}
\psfrag{1}{$x_1$}
\psfrag{2}{$x_2$}
\psfrag{3}{$x_3$}
\psfrag{4}{$x_4$}
\psfrag{5}{$x_5$}
\psfrag{6}{$x_6$}
\[\includegraphics[width=1.5 in]{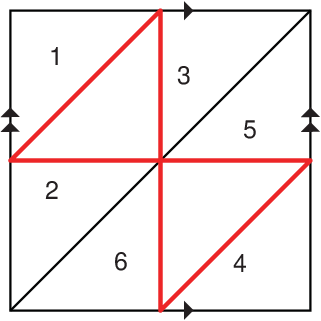}\]
\caption{The six normal arc types in a 1-vertex triangulation of a torus.}
\label{figNormalCurvesInATorus}
\end{figure}

Following \cite{Jaco2003}, a (not necessarily connected) normal curve in a one-vertex triangulation of a torus is parameterized by a free choice of normal coordinates $\{ (x_1,x_2,x_3)\ | \ x_i \in \mathbb N \}$ in exactly one of the two triangles, forcing the normal coordinates in the other triangle to be equal, $x_1=x_4, x_2 = x_5, x_3 = x_6$ (see Figure \ref{figNormalCurvesInATorus}).  Projectively, this set corresponds to the points with rational coordinates in the 2-simplex pictured in Figure \ref{figProjectiveSolutionSpace}.

\begin{figure}
\psfrag{1}{$1$}
\psfrag{2}{$2$}
\psfrag{3}{$3$}
\psfrag{a}{$x_1=1,x_2=0,x_3=0$}
\psfrag{b}{$x_1=0,x_2=1,x_3=0$}
\psfrag{c}{$x_1=0,x_2=0,x_3=1$}
\[\includegraphics[width=2.5 in]{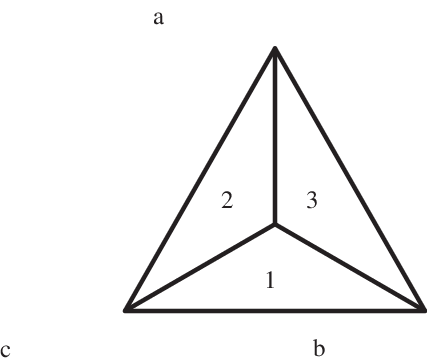}\]
\caption{The projective solution space has three types of normal curves.}
\label{figProjectiveSolutionSpace}
\end{figure}

A normal curve has {\it type} $i$ if $x_i \leq x_j, j \neq i$. Of course, a normal curve with a non-unique minimal coordinate has more than one type, and will be all three types precisely when it consists entirely of trivial curves. 


\begin{dfn}
\label{d:typed}
The {\em type} of a surface $S$ in $M$ is the pair of types of the boundary curves of $S$ on $T_1$ and $T_2$, respectively. A {\it typed compatibility class} is a compatibility class restricted to surfaces of the same type.
\end{dfn}

\begin{rmk}
\label{r:finite}
Since there are a bounded number of distinct normal isotopy classes of loops of bounded complexity on the boundary of a single tetrahedron, then it follows that there are a bounded number of typed compatibility classes with complexity less than a specified bound. For instance, there are at most $3^{2} 3^t$ typed compatibility classes of complexity zero surfaces, each determined by a pair of boundary types and a choice of a length 4 loop in each of the $t$ tetrahedra.
\end{rmk}
%


Let $\alpha$ and $\beta$ be normal arcs in an oriented triangle $\delta$. Then $\alpha$ and $\beta$ can be isotoped, keeping their boundaries fixed, so that they intersect in at most one point.  We define the {\it normal sign} of the point $\alpha \cap \beta$, if it exists, as follows. There is at least one arc $\epsilon$ of the 1-skeleton of $\delta$ that  connects $\alpha$ to $\beta$. Then we say $\alpha \cap \beta$ is

\begin{itemize}
\item {\em positive} if the orientation on $\epsilon$ induced by the orientation of $\delta$ points from $\beta$ to $\alpha$, and
\item {\em negative} otherwise  (see Figure~\ref{figNormalSwitches}).
\end{itemize}

\begin{figure}
\psfrag{a}{$\alpha$}
\psfrag{b}{$\beta$}
\psfrag{A}{$\epsilon$}
\psfrag{+}{$+1$}
\psfrag{-}{$-1$}
\[\includegraphics[width=3 in]{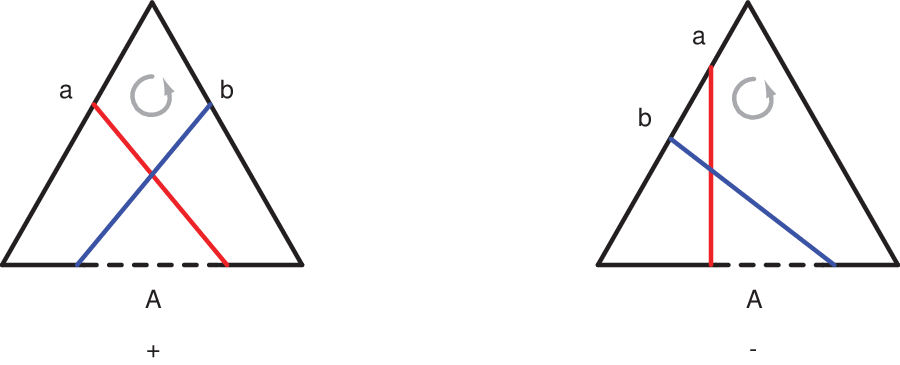}\]
\caption{Positive and negative intersections between normal arcs $\alpha$ and $\beta$.}
\label{figNormalSwitches}
\end{figure}

Lemma 3.5 of \cite{Jaco2003} shows that normal isotopy and isotopy are equivalent for normal curves in a one-vertex triangulation of a torus. Moreover,

\begin{lem}
\label{l:SameSign}
Suppose that normal curves of the same type, $\alpha$ and $\beta$, have been normally isotoped to intersect minimally in a one-vertex triangulation of a torus. Then all intersection points between $\alpha$ and $\beta$ have the same normal sign.
\end{lem}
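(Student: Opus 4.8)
The plan is to identify the normal sign of a crossing --- up to one overall sign fixed by a choice of orientations --- with the algebraic intersection sign on the torus, and then quote the classical fact that essential curves in minimal position on a torus realize their algebraic intersection number.

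Two reductions come first. In minimal position (which by Lemma~3.5 of \cite{Jaco2003} is the same as minimal normal position) a trivial component of $\alpha$ or $\beta$ meets nothing, so it can be discarded; parallel copies of a single essential component all cross any fixed curve with identical signs; and two essential components of the same slope can be made disjoint. Hence it suffices to treat the case that $\alpha$ and $\beta$ are connected, essential, of distinct slopes, and of the same type. Orient the torus and each of $\alpha$ and $\beta$; then every point of $\alpha\cap\beta$ is a transverse crossing in the interior of one of the two triangles, and all of these crossings carry the same algebraic sign.

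Next I would reinterpret the normal sign combinatorially. At a crossing $p$ inside a triangle $\delta$, the two arcs through $p$ cut off distinct corners of $\delta$ (two arcs cutting off the same corner are parallel) and share exactly one edge $e$, which joins those two corners. Unwinding the definition --- the only admissible $\epsilon$ is the subarc of $e$ lying between the two feet on $e$ --- shows that the normal sign of $p$ is $+1$ precisely when the corner cut off by the $\alpha$-arc, the corner cut off by the $\beta$-arc, and the remaining corner occur in that cyclic order in the boundary orientation of $\delta$. In particular the normal sign depends only on which corner each of the two arcs cuts off and on the orientation of $\delta$, and not at all on the orientations of $\alpha$ and $\beta$.

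The crux is to promote this local computation to the desired global conclusion using the hypothesis that $\alpha$ and $\beta$ share a type $i$. I would argue that the type $i$ singles out coherent orientations of $\alpha$ and of $\beta$ --- say, each oriented so that in every triangle the corner carrying its minimal (type-$i$) family of arcs lies to its left --- for which the algebraic sign of every crossing coincides with its normal sign. Checking this is a finite matter inside each triangle: the common type restricts which pairs of corners can be cut off by a crossing $\alpha$-arc and $\beta$-arc, and which arc cuts off which; and for each admissible configuration the cyclic-order rule above must be matched against the algebraic sign read off from the chosen orientations. The pairing $x_1=x_4$, $x_2=x_5$, $x_3=x_6$ of arc types across the two triangles is what makes the two triangles contribute compatibly. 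Granting this, every normal sign equals the corresponding algebraic sign, and by the second paragraph those are all equal. I expect this final step --- using the single datum ``same type'' to control both the crossing configurations and the interaction of the two triangles --- to be the main obstacle; the torus geometry itself contributes only the standard fact about minimal position.
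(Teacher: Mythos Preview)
Your approach is essentially the paper's: discard trivial components, orient the curves using the shared type, identify the normal sign with the algebraic sign, and invoke minimal position. The paper dispatches your ``main obstacle'' in one stroke by observing that once trivial components are removed a type-$i$ curve has $x_i=0$ and hence carries \emph{no} arcs of type $i$; orienting the two remaining arc types as in Figure~\ref{figOrientingArcs} then makes the normal/algebraic equality a single-picture check (Figure~\ref{figNormalIsAlgebraic}) rather than a case analysis.
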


\begin{proof}
Without loss of generality assume that $\alpha$ and $\beta$ are both of type 3. Since they intersect minimally, each trivial component of $\alpha$ and $\beta$ is disjoint from every other component.  Let $\alpha'$ and $\beta'$ be obtained by removing trivial components from $\alpha$ and $\beta$ respectively. Then  $\alpha'$ and $\beta'$ do not possess normal arcs of the third type and can be consistently oriented by applying orientations to the remaining two arc types as in Figure \ref{figOrientingArcs}.

\begin{figure}
\[\includegraphics[width=1.5 in]{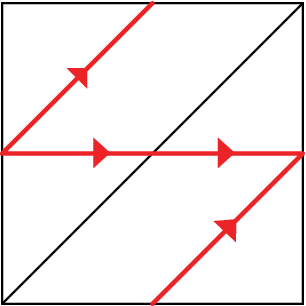}\]
\caption{Type 3 curves can be oriented by orienting normal arcs as indicated.}
\label{figOrientingArcs}
\end{figure}

Now consider an intersection between a normal arc of $\alpha'$ and a normal arc of $\beta'$. Since the curves are of type 3, the algebraic sign is equal to the normal sign of the intersection point (see Figure \ref{figNormalIsAlgebraic}).
 We have already observed, however, that  $\alpha'$ and $\beta'$ intersect minimally, hence all intersections have the same algebraic sign.  It follows that all intersection points have positive normal sign or all have negative normal sign.

\begin{figure}
\[\includegraphics[width=1.5 in]{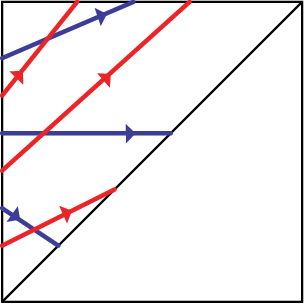}\]
\caption{For intersections between oriented type 3 curves, the algebraic sign is equal to the normal sign. }
\label{figNormalIsAlgebraic}
\end{figure}

\end{proof}

\begin{lem}
\label{l:sameCount}
Let $A$ and $B$ be surfaces meeting the 2-skeleton of $M$ normally, in the same typed compatibility class, whose intersection with $\bdy M$ is contained in $T_1 \cup T_2$. Suppose $A$ and $B$ are normally isotoped to intersect minimally. Let $\alpha_i = A \cap T_i$ and $\beta_i = B \cap T_i$ designate their normal boundary curves, oriented by type (see Figure~\ref{figOrientingArcs}). Then the algebraic intersection numbers are related by $\#(\alpha_1,\beta_1) = -\#(\alpha_2,\beta_2)$.
\end{lem}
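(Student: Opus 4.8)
The plan is to compute both algebraic intersection numbers by the same Euler-characteristic / counting bookkeeping and observe that they differ only by the orientation of the gluing. First I would set up the arena: because $A$ and $B$ lie in the same typed compatibility class, on the boundary of each tetrahedron $\Delta$ the long normal loops of $A\cap\bdy\Delta$ and $B\cap\bdy\Delta$ are normally isotopic, and after the minimal isotopy all intersection points of $A$ with $B$ occur inside the 2-simplices of $\mathcal T$, in fact between normal arcs of the same type in a common triangle. By Lemma~\ref{l:SameSign} (applied on each torus) and the local picture of Figure~\ref{figNormalIsAlgebraic}, the normal sign of each such arc–arc crossing equals the algebraic sign once the arcs are oriented by type; so the key point is to relate crossings occurring in triangles of $T_1$, crossings in triangles of $T_2$, and crossings in the interior 2-simplices of $M$.

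The main device is that $A$ and $B$, being closed (or with boundary only on $T_1\cup T_2$) and transverse, meet in a 1-manifold $A\cap B$ whose components are closed loops in $M_\phi$ — equivalently, after the gluing, the arcs of $A\cap B$ running through interior triangles and through the triangles of $T_1=T_2$ must match up around each such loop. I would orient $A$ and $B$ consistently with the chosen type-orientations of $\alpha_i,\beta_i$ on the boundary. Then for each triangle $\delta$ the number of $+$ crossings minus the number of $-$ crossings is determined by the boundary data (the endpoints of the arcs on the edges of $\delta$), and summing over all interior triangles these boundary contributions cancel in pairs across shared edges. What survives is the contribution from the two torus triangulations. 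The crucial sign flip arises because when we glue $T_1$ to $T_2$ by $\phi$, the orientation that $T_1$ inherits as a component of $\bdy M$ is \emph{opposite} to the one $T_2$ inherits (so that $M_\phi$ is oriented); equivalently $\phi$, read as an identification, reverses the boundary orientation. Carrying the orientation of $\delta$ through this identification converts every ``from $\beta$ to $\alpha$'' crossing on the $T_1$ side into a ``from $\alpha$ to $\beta$'' crossing on the $T_2$ side, which is exactly the statement $\#(\alpha_1,\beta_1)=-\#(\alpha_2,\beta_2)$.

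Concretely the steps are: (1) normally isotope $A,B$ to minimal position and record that every point of $A\cap B$ lies in some 2-simplex, between same-type arcs; (2) fix orientations of $A,B$ restricting to the type-orientations of Figure~\ref{figOrientingArcs} on each $\alpha_i,\beta_i$, and note via Lemma~\ref{l:SameSign} that within each torus all crossings have a single sign, so $\#(\alpha_i,\beta_i)=\pm|\alpha_i\cap\beta_i|$; (3) view $A\cap B\subset M_\phi$ as a disjoint union of circles, and for each interior edge $e$ of $\mathcal T$ note that the algebraic count of points of $A\cap B$ crossing $e$ is well defined and that each such point is an endpoint of exactly two arcs of $A\cap B$ lying in the two triangles adjacent to $e$; summing the per-triangle algebraic crossing numbers, all interior-edge terms cancel; (4) conclude that the total over the two torus triangles is zero, i.e.\ $\#(\alpha_1,\beta_1)+\#(\alpha_2,\beta_2)'=0$, where the prime records that we used the $\bdy M$-orientation on $T_2$; (5) finally reconcile orientations: the identification $\phi\colon T_1\to T_2$ reverses the boundary orientation, so $\#(\alpha_2,\beta_2)'=-\#(\alpha_2,\beta_2)$ with respect to a fixed orientation of the torus $T$, giving the claimed relation.

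The step I expect to be the main obstacle is (3)–(5): making the cancellation across interior edges precise when the arcs of $A\cap B$ may pass through a triangle several times, and then being scrupulous about which orientation of the gluing torus is being used on each side so that the single minus sign — and not a spurious extra one — is what comes out. In particular one must check that ``oriented by type'' is compatible on the two sides under $\phi$ only up to this boundary-orientation reversal, which is where the asymmetry $\#(\alpha_1,\beta_1)=-\#(\alpha_2,\beta_2)$ (rather than $+$) is forced.
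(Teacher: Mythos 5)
Your proposal has a genuine gap, and the central misconception is that the lemma has anything to do with the gluing map $\phi$ or the closed manifold $M_\phi$. Lemma~\ref{l:sameCount} is a statement about two surfaces $A$ and $B$ properly embedded in the manifold-with-boundary $M$; $\phi$ does not appear in the statement, and the surfaces are not assumed to have $\phi$-compatible boundary slopes. In particular, the components of $\alpha_1=A\cap T_1$ and of $\alpha_2=A\cap T_2$ need not match up under any identification of $T_1$ with $T_2$, so the 1-manifold $A\cap B$ does \emph{not} close up into circles in $M_\phi$ — it is a compact $1$-manifold properly embedded in $M$ with boundary on $T_1\cup T_2$. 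Consequently, attributing the minus sign to the boundary-orientation reversal of the gluing map is simply wrong: the minus sign in the paper's argument comes from the fact that every \emph{interior} face of the triangulation inherits opposite orientations from the two tetrahedra on either side (so its signed crossing count cancels in the global sum), whereas the faces on $T_1$ and on $T_2$ are each adjacent to a single tetrahedron and so survive. That dichotomy — interior face versus boundary face — is the source of the sign, not $\phi$.

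The other weak point is the cancellation mechanism itself. You claim that for each triangle $\delta$ the signed crossing count is ``determined by the boundary data'' and that these contributions ``cancel in pairs across shared edges,'' but the signed count of crossings between two arc families in a disk is a single number determined by the cyclic order of endpoints — it is not a sum of per-edge terms, and you give no reason for edge-wise cancellation. More importantly, this version of the argument never uses compatibility, which is essential. The paper establishes, for each tetrahedron $\Delta$, that the numbers of positive and negative normal-sign crossings on $\bdy\Delta$ are equal — and the proof of this equality is precisely where compatibility enters: compatibility supplies a normal isotopy of $A\cap\bdy\Delta$ off of $B\cap\bdy\Delta$, realizable through straight normal arcs, along which crossings can only cancel in $\pm$ pairs. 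Summing these per-tetrahedron equalities and letting the interior-face contributions cancel (for the orientation reason above) yields that the total signed count over $T_1\cup T_2$ is zero, and then Lemma~\ref{l:SameSign} isolates the two tori with opposite signs. If you want to make a homological version of your idea rigorous you would have to orient $A$, $B$, and $A\cap B$, then reconcile the induced boundary orientations on $\alpha_i,\beta_i$ with the ``type'' orientations used in the statement; none of that reconciliation is done in your sketch, and it is exactly the bookkeeping the paper avoids by working face-by-face with normal signs.
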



\begin{proof}
Consider a tetrahedron, $\Delta$. Let $\alpha$ denote a component of $A \cap \bdy \Delta$, and $\beta$ a component of $B \cap \bdy \Delta$. Since $A$ and $B$ intersect minimally, we may assume each normal arc of $\alpha$ and $\beta$ is a straight line segment.

As $A$ and $B$ are compatible, there is an isotopy $\alpha_t$ from $\alpha$ to a normal loop $\alpha'$  in $\bdy \Delta$ that is disjoint from $\beta$.   We can choose such an isotopy that interpolates the intersections with the 1-skeleton and so that for all $t$, each normal arc of $\alpha_t$ is a straight line segment.  Furthermore, by adjusting the rate of interpolation, we may assume that the isotopy is in general position.   Let $\{t_i\}$ denote the critical values of $\alpha_t \cap \beta$, i.e.~the values of $t$ such that $\alpha_t$ and $\beta$ do not intersect transversely on $\partial \Delta$. It follows that for each $i$, $\alpha_{t_i} \cap \beta$ includes a point of the 1-skeleton. Let $\eta_t$ denote the difference between the number of positive and negative intersection points of $\alpha_t$ and $\beta$, when their intersection is in general position.

\begin{figure}
\psfrag{a}{$\alpha_{t_i-\epsilon}$}
\psfrag{A}{$\alpha_{t_i+\epsilon}$}
\psfrag{b}{$\beta$}
\[\includegraphics[width=4 in]{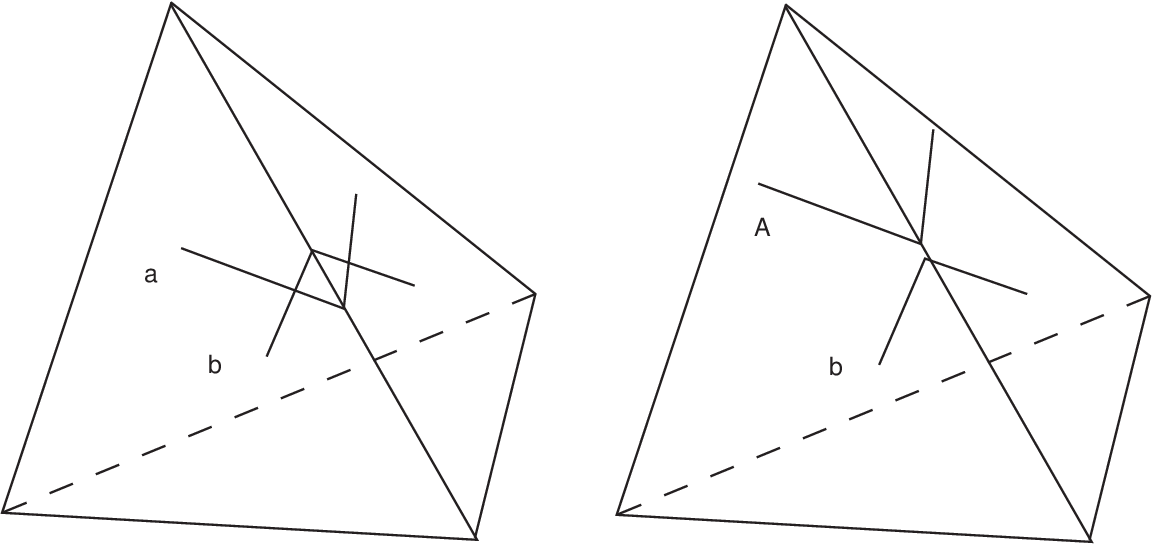}\]
\caption{Positive and negative intersections cancel as $t$ increases through $t_i$.}
\label{f:PositiveNegativeCancellation}
\end{figure}

Just before (or after) $t_i$, $\alpha_t$ meets $\beta$ as in Figure \ref{f:PositiveNegativeCancellation}. Here we see two intersections, one of each normal sign, of $\alpha_t \cap \beta$ which cancel as $t$ increases through $t_i$. It follows that $\eta_{t_i-\epsilon}=\eta_{t_i+\epsilon}$. As $\alpha' \cap \beta=\emptyset$, we conclude $\eta_t$ is zero for all non-critical $t$. It follows that there are an equal number of positive and negative normal signs on the intersections of $\alpha$ and $\beta$ on $\partial \Delta$.

Note that the induced orientations on an interior face $\delta$ are opposite from the two (not necessarily distinct) tetrahedra on either side of it. Therefore the normal sign of each intersection point is opposite with respect to each orientation. It follows that the sum of the normal intersections is zero when summed over only faces in the boundary. By Lemma \ref{l:SameSign}, on each boundary component $T_i$, each intersection point between $\alpha_i$ and $\beta_i$ has the same sign. Therefore the positive and negative intersections between $A$ and $B$ on $T_1 \cup T_2$ happen on different tori. Since they agree with the algebraic intersection numbers there, $\#(\alpha_1,\beta_1) = - \#(\alpha_2,\beta_2)$.
\end{proof}

The above result is particularly relevant to the case that $M$ is connected. In the case that $M$ consists of two components (each having a torus boundary component), Lemma~\ref{l:sameCount} implies that every surface in a given compatibility class (intersecting $\partial M$ in $T_1 \cup T_2$ only) intersects each boundary torus in a unique slope. 



Fix homology bases for $T_1$ and $T_2$ so that the orientation on $T_1$ agrees with the induced boundary orientation, and the orientation on $T_2$ disagrees with the induced boundary orientation.  A {\it slope} is the isotopy class of a closed and connected essential curve in a torus. With our choice of basis for the homology of the torus, slopes are parameterized by $\widehat{ \mathbb Q} = \mathbb Q \cup \{ 1/0  \}$. If $c$ is an embedded but disconnected curve that contains an essential curve, then we will define  the {\it slope of $c$}, denoted $\langle c \rangle$, to be the isotopy class of one of its essential connected components. The {\it slope of a surface $S$} on a torus $T$ is the slope of an essential component of its intersection with $T$. An element $\Phi \in  SL_2 (\mathbb Q) $ acts on the set of slopes $\widehat{ \mathbb Q}$ as a linear fractional transformation, i.e., $\Phi(q) = \frac{aq + b }{cq+d}$.




\begin{lem}
\label{SL2Qlemma}
Let $\mathcal C$ be a typed compatibility class of surfaces that meet the 2-skeleton normally. Then there exists an element $\Phi_{\mathcal C} \in SL_2 (\mathbb Q) $ such that for all $S \in \mathcal C$ where $S \cap T_1 \neq \emptyset$ , $S \cap T_2 \neq \emptyset$, and   $S \cap \bdy M \subset T_1 \cup T_2$, the element $\Phi_{\mathcal C}$ maps the slope of $S$ on $T_2$ to the slope of $S$ on $T_1$, i.e. $\Phi_{\mathcal C} (\langle S \cap T_2 \rangle) = \langle  S \cap T_1 \rangle $.
\end{lem}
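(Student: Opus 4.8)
The plan is to produce the linear fractional transformation $\Phi_{\mathcal C}$ not by a direct geometric construction, but by exploiting the algebraic relation established in Lemma~\ref{l:sameCount}. First I would fix, once and for all, a reference surface $S_0 \in \mathcal C$ with $S_0 \cap T_1 \neq \emptyset$, $S_0 \cap T_2 \neq \emptyset$, $S_0 \cap \bdy M \subset T_1 \cup T_2$ (if no such surface exists the statement is vacuous, and one may take $\Phi_{\mathcal C}$ arbitrary). Write $\langle S_0 \cap T_i \rangle = p_i/q_i$ in the chosen homology bases, so the oriented boundary class of $S_0$ on $T_i$ is $n_i(p_i, q_i)$ for some positive integer multiplicity $n_i$. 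The key point I want to extract from Lemma~\ref{l:sameCount} is that for \emph{any} other $S \in \mathcal C$ satisfying the same boundary hypotheses, the algebraic intersection number of the oriented boundary curves of $S$ and $S_0$ on $T_1$ is the negative of the corresponding number on $T_2$; concretely, if $\langle S \cap T_i \rangle = a_i/b_i$ with multiplicity $m_i$, then
\[
m_1 n_1 (p_1 b_1 - q_1 a_1) \;=\; -\, m_2 n_2 (p_2 b_2 - q_2 a_2).
\]
This single scalar equation, valid for every admissible $S$, is the engine of the proof.

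Next I would argue that this forces the map $\langle S \cap T_2\rangle \mapsto \langle S \cap T_1\rangle$ to be the restriction of an element of $PGL_2(\mathbb Q)$, and then upgrade $PGL_2$ to $SL_2$ using the orientation conventions. The mechanism: pick two surfaces $S_0, S_1 \in \mathcal C$ whose slopes on $T_2$ are distinct (handling separately the degenerate case where every admissible surface has the same $T_2$-slope, where any $\Phi$ sending that one slope correctly will do). Using Lemma~\ref{l:sameCount} applied to the pairs $(S, S_0)$ and $(S, S_1)$, I get two bilinear equations relating $(a_1,b_1)$ to $(a_2,b_2)$; eliminating, these pin down $a_1/b_1$ as a fixed fractional-linear function of $a_2/b_2$ with rational coefficients, independent of $S$. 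The determinant of the resulting $2\times 2$ matrix is nonzero because the map is not constant (two surfaces with distinct $T_2$-slopes have distinct $T_1$-slopes, again by Lemma~\ref{l:sameCount} — equal $T_1$-slopes would force the intersection number on $T_1$ to vanish while that on $T_2$ does not). Scaling the matrix to have determinant $1$ gives $\Phi_{\mathcal C} \in SL_2(\mathbb Q)$; the fact that the determinant can be taken positive (rather than merely normalized in $PGL_2$) is exactly where the asymmetric orientation conventions on $T_1$ and $T_2$ — chosen so that one agrees and one disagrees with the boundary orientation — are used, matching the sign in Lemma~\ref{l:sameCount}.

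I anticipate the main obstacle is the bookkeeping around \emph{disconnected} boundary curves and multiplicities. Lemma~\ref{l:sameCount} is stated for the full oriented intersection curves $\alpha_i, \beta_i$, which may have several parallel essential components plus trivial components, whereas $\langle S \cap T_i\rangle$ remembers only the slope of one essential component. I need to check that the trivial (inessential) components contribute nothing to the algebraic intersection number — they bound disks and so are algebraically trivial — and that the essential components are all parallel and coherently oriented by the "oriented by type" convention of Figure~\ref{figOrientingArcs}, so that $\#(\alpha_i, \beta_i)$ is genuinely $m_i n_i$ times the primitive intersection pairing $p_ib_i - q_ia_i$. Granting that, one also has to confirm that the multiplicities $m_i, n_i$ and the minimal-position hypothesis do not obstruct the elimination step: since they appear as positive scalar factors they can be absorbed, and minimality of intersection is available by Lemma~3.5 of \cite{Jaco2003} together with Lemma~\ref{l:SameSign}. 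A secondary, purely technical point is the degenerate cases — a compatibility class all of whose admissible surfaces share a $T_2$-slope, or one containing no admissible surface at all — which should be dispatched at the outset so the generic argument runs cleanly.
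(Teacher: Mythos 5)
Your proposal is essentially the paper's argument: both hinge on Lemma~\ref{l:sameCount}, split into the degenerate one-slope case and the generic two-reference-surface case, and in the generic case use the intersection pairing of an arbitrary $S$ against the two reference surfaces to pin down a fractional-linear map, with the asymmetric orientation convention on $T_1$ versus $T_2$ forcing the determinant to come out $+1$. The paper streamlines the bookkeeping you flag by working directly with the integral homology classes $\vec r_i = [R\cap T_i]$, $\vec s_i = [S\cap T_i]$ (which already carry the multiplicities), setting $\Psi_i = (\vec r_i\,|\,\vec s_i)$, observing $\det\Psi_1 = \det\Psi_2$ via Lemma~\ref{l:sameCount}, and taking $\Phi_{\mathcal C} = \Psi_1\Psi_2^{-1}$ outright — thereby avoiding the separate multiplicity factors $m_i, n_i$ in your bilinear equations.
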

\begin{proof}

There are two cases. In the first case, we assume that all surfaces $S \in  \mathcal C$ meet $T_2$ in a single slope $\tau_2$.  Since any pair of these surfaces can be isotoped to be disjoint on $T_2$, it follows from Lemma \ref{l:sameCount} that they can also be isotoped to be disjoint on $T_1$. Hence all surfaces $S \in \mathcal C$ have the same slope $\tau_1$ on $T_1$.   With respect to the given bases on $T_i$ choose a particular element $\Phi_{\mathcal C} \in SL_2 (\mathbb Q)$ so that $\Phi_{\mathcal C} ( \tau_2 ) = \tau_1 $.

In the second case, we suppose that there are two surfaces $R$ and $S$ in $\mathcal C$ which meet $T_2$ in distinct slopes, and hence in distinct slopes on $T_1$.  Let $\vec r_i$ and $\vec s_i$ be column vectors expressing the homology classes  $[R \cap T_i]$ and $[S \cap T_i]$, respectively.  Construct the matrices $\Psi_i = ( \vec r_i \vec s_i )$.   By Lemma \ref{l:sameCount} and because one of the chosen homology bases disagrees with the induced boundary orientations, $det( \Psi_1) = det (\Psi_2)$.   Now let $\Phi_{\mathcal C} = \Psi_1 \Psi_2^{-1}$.  Note that $\Phi_{\mathcal C}$ has determinant 1, and thus  can be regarded as an element of $SL_2(\mathbb Q)$. Restricting the equation $\Phi_{\mathcal C} \Psi_2 = \Psi_1$ to each column vector of $\Psi_2$, we obtain $\Phi_{\mathcal C} \vec r_2 =  \vec r_1$ and $\Phi_{\mathcal C} \vec s_2 = \vec s_1 $, i.e., $\Phi_{\mathcal C}$ is a map between the homology classes of the boundaries of these surfaces.

We need to show this holds for any other surface $Q \in \mathcal C$.   Suppose that $Q$ meets the boundary with homology classes $\vec q_1$ and $\vec q_2$. Since $\vec r_1$ and $\vec s_1$ are independent, any other vector is uniquely determined by its intersections with them.   We check that $\vec q_1$ and $\Phi_\mathcal C  \vec q_2$ have the same number of intersections with $\vec{r_1}$:

$$det( \Phi_\mathcal C  \vec q_2, \vec r_1 ) = det( \Phi_\mathcal C  \vec q_2, \Phi_\mathcal C  \vec r_2 ) = det( \Phi_\mathcal C ) \, det(\vec q_2,\vec r_2) = det(\vec q_1, \vec r_1),$$ by Lemma \ref{l:sameCount}.  The calculation for  $\vec s_1$ is identical and thus $\Phi_\mathcal C $ maps $\vec q_2$ to $\vec q_1$, as desired.

Finally, in this second case, we regard $\Phi_\mathcal C $ as a linear fractional transformation so that it acts on slopes rather than homology classes.
\end{proof}

We now define the notion of distance of a map that glues together two torus boundary components of a 3-manifold. To do this, we first define the Farey graph:

\begin{dfn}
Let $T$ be a torus. The {\it Farey graph} for $T$ is the graph formed by taking a vertex for each slope on $T$, and connecting two vertices with an edge if simple closed curve representatives of the two corresponding slopes can be isotoped to intersect in one point on $T$. The distance between two slopes is taken to be the minimum length of a path in the Farey graph between the corresponding vertices. 
\end{dfn} 

\begin{dfn}
\label{d:c-distance}
Fix a number $c$. For each typed compatibility class $\mathcal C$ of surfaces of complexity at most $c$ (of which there are a finite number by Remark~\ref{r:finite}), let $\Phi_{\mathcal C}$ be the element of $SL_2 (\mathbb Q)$ given by Lemma~\ref{SL2Qlemma}. Suppose $\phi \colon T_1 \to T_2$ is a gluing map. Then we say $\phi \Phi_{\mathcal C}$ has {\it distance} $d$ if the minimum distance (in the Farey graph of $T_2$), over all slopes $\gamma$ on $T_2$, of $\gamma$ and the slope of $\phi \Phi_{\mathcal C} (\gamma)$, is $d$. The {\it $c$-distance} of the gluing map $\phi$ is then the minimum, over all maps $\Phi_{\mathcal C}$, of the distances of $\phi \Phi_{\mathcal C}$. 
\end{dfn}


Note that if the $c$-distance of $\phi$ is non-zero then each composition $\phi \Phi_{\mathcal C}$ fixes no slope. Also, observe that if $c \leq c'$, then the $c$-distance of $\phi$ is greater than or equal to the $c'$-distance of $\phi$.

\section{Proof of Theorem \ref{main_thm1}.}
\label{s:IntersectingWithTori}

In this section we present the proof of Theorem \ref{main_thm1}. This is broken up into two cases, depending on whether the surfaces we are concerned with are incompressible or strongly irreducible.

\subsection{The incompressible case.}\

\begin{lem}
\label{l:IncompressibleTorusIntersection}
Let $X$ be a compact, irreducible, orientable 3-manifold with $\partial X$ incompressible, if non-empty. Suppose $S$ and $T$ are closed, essential surfaces in $X$. Then $S$ may be isotoped to be transverse to $T$ with every component of $S \setminus N(T)$ incompressible in the respective submanifold of $X \setminus N(T)$.
\end{lem}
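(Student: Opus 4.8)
The plan is to start with any transverse position of $S$ and $T$ and then reduce the intersection by an innermost-disk argument, using irreducibility of $X$ and incompressibility of $S$, $T$, and $\partial X$ to control each simplification. First I would isotope $S$ to meet $T$ transversely, so that $S \cap T$ is a (possibly empty) collection of simple closed curves on both surfaces. Among all such transverse positions, choose one that minimizes $|S \cap T|$, the number of these curves. I claim this minimal position already has the desired property. The curves of $S \cap T$ cut $S$ into subsurfaces, which are exactly the components of $S \setminus N(T)$; I want each such component to be incompressible in the piece of $X \setminus N(T)$ containing it.

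The heart of the argument is to show that a compressing disk for some component $S_0$ of $S \setminus N(T)$ leads to a contradiction with minimality. Suppose $D$ is such a compressing disk, lying in some component $X_0$ of $X \setminus N(T)$, with $\partial D \subset S_0$ an essential curve in $S_0$ but not necessarily essential in $S$. If $\partial D$ bounds a disk $D'$ in $S$, then $D'$ must contain curves of $S \cap T$ (else $\partial D$ would be inessential in $S_0$); take an innermost such curve $\gamma$ on $D'$, bounding a subdisk $D'' \subset D' \subset S$ with interior disjoint from $T$. Then $\gamma$ is a curve on $T$ bounding the disk $D''$; since $T$ is incompressible and $X$ is irreducible, $\gamma$ also bounds a disk $E$ on $T$, and $D'' \cup E$ bounds a ball, allowing an isotopy of $S$ across that ball removing $\gamma$ (and possibly more curves) from $S \cap T$ — contradicting minimality. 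So we may assume $\partial D$ is essential in $S$. But $S$ is incompressible in $X$, so $\partial D$ must bound a disk on the other side — more precisely, $D$ together with a subsurface of $S$ must be used; the standard move is: since $S$ is essential and $\partial D$ is essential on $S$, the disk $D$ cannot be a genuine compression of $S$, so $\partial D$ is isotopic in $X$ to a curve we can push off, and again using irreducibility we can isotope $S$ to reduce $|S\cap T|$ — the compression $D$ guides an isotopy of $S$ that either removes intersection curves with $T$ or reduces the complexity, contradicting minimality. The case $\partial D \subset \partial(N(T))$-parallel annulus pieces and the case where $D$ meets $\partial X$ are handled by incompressibility of $\partial X$ in the same innermost-fashion.

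The main obstacle I anticipate is the bookkeeping in the case analysis: $\partial D$ essential in $S_0$ splits into the subcase where it is inessential in $S$ (handled by innermost curves on the bounded disk, using irreducibility of $X$ and incompressibility of $T$) and the subcase where it is essential in $S$ (contradicting incompressibility of $S$, but one must be careful that the isotopy of $S$ guided by $D$ genuinely decreases $|S \cap T|$ rather than trading one intersection pattern for another of equal complexity — this is where one uses that $D$ lies in a single complementary component and that $T$ is two-sided). One should also confirm that after all reductions no component of $S\setminus N(T)$ is a disk or sphere that could itself be compressible trivially; here closedness and essentiality of $S$ plus irreducibility of $X$ rule out sphere components. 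I expect the write-up to follow the familiar Haken-style normalization argument, and the key inputs are precisely the three hypotheses: $X$ irreducible, $S$ essential, and $\partial X$ incompressible.
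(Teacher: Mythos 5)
Your overall strategy --- minimize $|S \cap T|$ and run an innermost-disk argument --- is indeed the paper's, and your ``Case 1'' (where $\partial D$ is inessential in $S$) is essentially correct: the bounded disk in $S$ meets $T$, an innermost circle of intersection is inessential on $T$ by incompressibility, the resulting sphere bounds a ball by irreducibility, and pushing through the ball reduces $|S\cap T|$. But there are two problems with the rest.

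First, ``Case 2'' is muddled and contains a reversal. If $D$ is a disk in $X \setminus N(T)$ with $D \cap (S \setminus N(T)) = \partial D$ --- that is, with interior disjoint from $S$ --- and $\partial D$ is essential in $S$, then $D$ is already a compressing disk for $S$ in $X$, which contradicts the incompressibility of $S$ outright. You do not need (and cannot construct) the ``isotopy of $S$ guided by $D$'' that you gesture at; the hypothesis kills the case immediately. Your sentence ``$S$ is incompressible in $X$, so $\partial D$ must bound a disk on the other side'' has the implication backwards: incompressibility forbids such a $D$; it does not supply a disk on the other side. The paper instead argues that since $S$ is incompressible, $\partial D$ \emph{must} bound a subdisk $E$ of $S$ --- i.e., your Case 1 is the only possibility --- and proceeds from there.

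Second, and more importantly, there is a missing stage. The lemma asserts that \emph{each component} of $S \setminus N(T)$ is incompressible. A compressing disk $C$ for a single component $S_0$ need only satisfy $C \cap S_0 = \partial C$; its interior is allowed to meet the \emph{other} components of $S \setminus N(T)$. When that happens, $C$ is not a compressing disk for $S$, so you cannot invoke incompressibility of $S$, and the Case 1/Case 2 dichotomy does not apply to it. This is not a bookkeeping subtlety; it is a genuine circularity that must be broken. The paper does so in two steps: it first shows that the \emph{collection} $S\setminus N(T)$ has no compressing disk (this is what your argument proves), and then, given a compressing disk $C$ for a single component chosen to minimize $|\mathrm{int}(C)\cap S|$, it runs a second innermost-disk argument on $C$: an innermost circle $\alpha$ of $C \cap S$ bounds a subdisk $C'\subset C$ with $C'\cap S = \partial C'$, which by the first stage must be inessential on $S\setminus N(T)$, and then irreducibility lets one isotope $C$ to reduce $|\mathrm{int}(C)\cap S|$, a contradiction. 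Your proposal never confronts this, and the parenthetical worry you raise (``trading one intersection pattern for another'') points in a different direction than the actual difficulty.
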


\begin{proof}
Isotope $S$ so that it meets $T$ transversally, and so that $|S \cap T|$ is minimal. Let $\alpha$ be a loop of $S \cap T$. Let $D$ be a compressing disk for $S \setminus N(T)$. As $S$ is incompressible in $X$, $\bdy D$ bounds a subdisk $E$ of $S$, where necessarily $E \cap T \ne \emptyset$. As $T$ is incompressible, every loop of $E \cap T$ must be inessential on $T$. We conclude $S \cap T$ contains some loop that is inessential on both surfaces. But then by a standard innermost disk argument we can lower $|S \cap T|$, a contradiction. Thus, $S \setminus N(T)$ must be incompressible in $X \setminus N(T)$.

We must now establish that each component of $S \setminus N(T)$ is incompressible. Choose the compressing disk $C$ for a component of $S \setminus N(T)$ whose interior meets the other components in a minimal number of loops. Let $\alpha$ be such a loop that is innermost on $C$. Then $\alpha$ bounds a subdisk $C'$ of $C$, and $C' \cap S=\bdy C'$. As $S \setminus N(T)$ is incompressible, $\bdy C'$ is inessential on $S \setminus N(T)$. But then by a similar innermost disk argument we may reduce the number of intersections of $C$ with $S \setminus N(T)$, a contradiction.
\end{proof}

\begin{lem}
\label{l:IncompImpliesBdyIncomp}
Let $M$ be a compact, orientable, irreducible 3-manifold. Let $S$ be an orientable, connected, properly embedded, incompressible surface in $M$, such that each loop of $\bdy S$ is contained in a torus component of $\bdy M$. Then either $S$ is a boundary-parallel annulus or $S$ is $\bdy$-incompressible.
\end{lem}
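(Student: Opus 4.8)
The plan is to argue by contradiction: suppose $S$ admits a $\bdy$-compressing disk $D$, and show that either $S$ compresses (contradicting the hypothesis) or $S$ is a boundary-parallel annulus. The boundary of $D$ decomposes as $\bdy D = a \cup b$, where $a$ is an arc in $S$ and $b$ is an arc in $\bdy M$; since the components of $\bdy S$ all lie on torus components of $\bdy M$, the arc $b$ lies on a single torus $T' \subset \bdy M$. First I would perform the $\bdy$-compression, obtaining a new properly embedded surface $S'$ (with one or two components) by cutting $S$ along $a$ and gluing in two parallel copies of $D$. The standard observation is that $S$ is recovered from $S'$ by attaching a tube (a neighborhood of the dual arc), so any compressing disk for $S'$ that is disjoint from this tube would give a compressing disk for $S$; since $S$ is incompressible and $M$ irreducible, each component of $S'$ is either incompressible or a sphere bounding a ball.

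Next I would analyze the curve $\bdy S \cap T'$ together with $a$ on $T'$ and the Euler characteristic bookkeeping. The key case distinction is whether $a$ is separating or non-separating on $S$. If the $\bdy$-compression is "trivial" — i.e. $a$ cuts off a disk from $S$, or $b$ together with a subarc of $\bdy S$ bounds a disk in $T'$ that can be used to isotope $a$ across — then one shows $S$ must itself be a disk or the $\bdy$-compression is not genuine, contradicting incompressibility (a compressing disk and an incompressible surface in an irreducible manifold: a $\bdy$-parallel disk would mean $\bdy S$ is trivial, impossible for an essential loop on a torus). Otherwise the $\bdy$-compression is essential. Here I would use that $\chi(S') = \chi(S) + 1$ and that each piece of $S'$ is incompressible (or a ball-bounding sphere, which can be discarded using irreducibility of $M$), to push toward the conclusion that $\chi(S)$ is close to $0$. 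The arc $a$ runs between two boundary components of $S$ lying on $T'$, or from $\bdy S$ to itself; tracking slopes on the torus $T'$, the two boundary curves joined by $a$ must be parallel on $T'$.

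To pin down that $S$ is an annulus, I would argue: after the essential $\bdy$-compression, a component $S'$ of $S'$ is a planar incompressible surface, and reassembling, $S$ has $\chi(S) = 0$, so $S$ is an annulus or torus; it has nonempty boundary, so it is an annulus with both boundary curves on $T'$ (by the slope-matching above). Finally, an incompressible, $\bdy$-compressible annulus in an irreducible $3$-manifold with incompressible boundary is boundary-parallel — this is the classical fact (one performs the $\bdy$-compression to get a disk $E$, which must be $\bdy$-parallel since $\bdy M$ is incompressible, and then the region between $E$ and $T'$ exhibits the annulus as $\bdy$-parallel). I would also need to rule out the case $T' = $ a non-torus component or that $\bdy$ pieces lie on different components, but the hypothesis forces $\bdy S \subset$ torus components and the arc $a$ connects curves on the same such component.

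The main obstacle I expect is the careful Euler-characteristic and slope bookkeeping in the "essential $\bdy$-compression" case: one must ensure that after the $\bdy$-compression the resulting surface genuinely has smaller complexity and that no sphere or disk components secretly appear which would let $S$ compress — in particular handling the possibility that a $\bdy$-compression disk for an incompressible surface could in principle be made "inessential," and verifying that in the remaining genuine case the only surface with the right Euler characteristic and boundary behavior is the annulus. The slope argument (that the two feet of $a$ lie on parallel essential curves of $T'$, using that trivial loops of $\bdy S$ on $T'$ can be removed by the incompressibility of $S$ via innermost-disk arguments as in Lemma~\ref{l:IncompressibleTorusIntersection}) is the technical heart.
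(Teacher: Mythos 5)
Your proposal alludes to the right geometric ingredient (the two boundary curves of $S$ joined by the $\bdy$-compression arc are parallel on the torus), but the actual argument you build around it has a genuine gap, and you never quite carry out the step that makes the proof work.

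The gap is in the Euler-characteristic and planarity claims. You assert that ``after the essential $\bdy$-compression, a component of $S'$ is a planar incompressible surface, and reassembling, $S$ has $\chi(S)=0$.'' There is no reason for this. Boundary-compressing an incompressible surface raises $\chi$ by exactly one and changes nothing else a priori: a genus-$g$ surface with boundary could $\bdy$-compress to a genus-$(g-1)$ or genus-$g$ surface, none of which is planar, and the resulting pieces being incompressible (or spheres bounding balls) puts no upper bound on genus. So ``$\chi(S)$ is close to $0$'' does not follow from the bookkeeping you describe; the fact that $S$ ends up an annulus has to come from somewhere else. Without this, the route through the surgered surface $S'$ stalls out.

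The missing idea is precisely the one the paper uses, and it bypasses $S'$ entirely. Write $\bdy D' = \alpha\cup\beta$ with $\alpha\subset S$, $\beta\subset T$ for a torus component $T$ of $\bdy M$, and argue directly on $T$: if $\bdy\beta$ joins two distinct loops of $\bdy S\cap T$, those loops cobound an annulus $A\subset T$, and two parallel copies of $D'$ together with the disk $A'$ obtained by removing a neighborhood of $\beta$ from $A$ form a disk in $M$ with boundary a single loop on $S$. Incompressibility of $S$ then forces $S$ to be a $\bdy$-parallel annulus (the loop must cap off on the ``outside'' of the pair-of-pants neighborhood of $\alpha$ and the two boundary curves, because a disk in $S$ cannot have $\bdy S$ in its interior). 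If instead $\bdy\beta$ returns to a single loop of $\bdy S\cap T$, then $\beta$ together with an arc of $\bdy S$ cuts off a subdisk $A''\subset T$, and $D'\cup A''$ is an honest compressing disk for $S$, a contradiction. This two-case argument is short and avoids any need to control genus through Euler characteristic; the torus hypothesis is what supplies the disk $A'$ or $A''$, and that is the step your write-up defers to ``the technical heart'' without executing.
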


\begin{proof}
Suppose there is a $\bdy$-compression $D'$ for $S$. Let $T$ be the component of $\bdy M$ that meets $D'$. Then $D' \cap T$ is an arc, $\beta$. There are now two cases. Suppose first $\bdy \beta$ connects two distinct loops of $S \cap T$. As $T$ is a torus, these loops cobound an annulus $A$ of $T$. Let $A'$ denote the disk obtained from $A$ by removing a neighborhood of $\beta$. Then two parallel copies of $D'$, together with the disk $A'$, is a disk whose boundary is on $S$. As $S$ is incompressible, we conclude $S$ is a boundary-parallel annulus.

The second case is when $\bdy \beta$ connects some loop of $S \cap T$ to itself. Then $\beta$, together with an arc of $S \cap T$, cobounds a subdisk $A''$ of $T$. The disk $D' \cup A''$ is then a compressing disk for $S$, a contradiction.
\end{proof}

We now establish the incompressible case of Theorem \ref{main_thm1}. Recall that $M$ is assumed to be a 3-manifold with incompressible boundary, $\phi \colon T_1 \to T_2$ is a gluing map between two torus components of $\bdy M$, and $M_\phi$ is the resulting 3-manifold. Let  $T$ be the image of $T_1$ in $M_\phi$. Fix a triangulation $\mathcal T$ of $M$ as in Section \ref{s:Normal-like}. Let $S$ be a closed, orientable,  incompressible surface in $M_\phi$.

By Lemma \ref{l:IncompressibleTorusIntersection} we can isotope $S$ so that $S'=S \setminus N(T)$ is incompressible in $M_\phi \setminus N(T) \cong M$. If, for some $i$, every component of $S'$ that meets $T_i$ can be isotoped into a neighborhood of $T_i$, then we can isotope $S$ in $M_\phi$ to miss $T$ entirely, and the result follows. Otherwise, by Lemma \ref{l:IncompImpliesBdyIncomp}, for each $i$ there is a component of $S'$ that meets $T_i$ and is both incompressible and $\bdy$-incompressible. Let $S''$ then be a (possibly disconnected) subsurface of $S'$, which is properly embedded in $M$, is incompressible, $\bdy$-incompressible, and meets both $T_1$ and $T_2$.

By \cite{Haken} $S''$ can be isotoped to be a normal surface with respect to $\mathcal T$. In this position, $S''$ is a complexity zero surface in $M$ such that $\langle \phi( S'' \cap T_1 ) \rangle= \langle S'' \cap T_2 \rangle$.

Let $\mathcal C$ denote the typed compatibility class of $S''$. Then by Lemma \ref{SL2Qlemma}, $\Phi_{\mathcal C}(\langle  S'' \cap T_2 \rangle) = \langle S'' \cap T_1\rangle$. It follows that $\phi \Phi_{\mathcal C}$ fixes the slope $S'' \cap T_1$, and thus the map $\phi$ has $0$-distance (and hence 1-distance) zero. This concludes the proof of the incompressible case of Theorem \ref{main_thm1}.

\subsection{The strongly irreducible case.}

When the surface $S$ is strongly irreducible, we may appeal to the results of \cite{Bachman2006}. Lemmas \ref{l:StronglyIrreducibleSweepout} and \ref{l:StronglyIrreducibleImpliesDistanceOne} below, taken from that paper,  will play the roles that Lemmas \ref{l:IncompressibleTorusIntersection} and  \ref{l:IncompImpliesBdyIncomp} played in the previous subsection.

\begin{lem}
\label{l:StronglyIrreducibleSweepout}
\cite{Bachman2006} Let $X$ be a compact, irreducible, orientable 3-manifold with $\partial X$ incompressible, if non-empty. Suppose $X = V \cup _S W$, where $S$ is a strongly irreducible Heegaard surface. Suppose further that $X$ contains an incompressible, orientable, closed, non-boundary parallel surface $T$. Then either
\begin{itemize}
	\item $S$ may be isotoped to be transverse to $T$, with every component of $S \setminus N(T)$ incompressible in the respective submanifold of $X \setminus N(T)$, or
	\item $S$ may be isotoped to be transverse to $T$, with every component of $S \setminus N(T)$ incompressible in the respective submanifold of $X \setminus N(T)$ except for exactly one strongly irreducible component, or
	\item $S$ may be isotoped to meet $T$ in a single saddle tangency (and transverse elsewhere), with every component of $S \setminus N(T)$ incompressible in the respective submanifold of $X \setminus N(T)$.
\end{itemize}
\end{lem}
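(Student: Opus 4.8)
The plan is to run the standard Rubinstein--Scharlemann sweepout argument adapted to a Heegaard surface and an incompressible surface, exactly in the spirit of the first author's earlier work. First I would fix a sweepout of $X$ by copies $S_t$, $t \in [0,1]$, of the Heegaard surface $S$, where $S_0$ and $S_1$ are spines of the two handlebodies (or compression bodies) $V$ and $W$; after a small isotopy I may assume each $S_t$ is transverse to the incompressible surface $T$ except at finitely many times where a single tangency (center or saddle) occurs, and that these tangency times are distinct. For each regular value $t$ I partition the components of $S_t \setminus N(T)$ into those lying on the $V$-side and those on the $W$-side, and I label $t$ with ``$v$'' if every $W$-side piece is compressible into (a collar of a spine on) the $V$-side of the complement via a disk disjoint from the rest, and dually with ``$w$'' — this is the Rubinstein--Scharlemann labelling scheme. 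Near $t=0$ only the label $v$ occurs and near $t=1$ only $w$ occurs, since there $S_t$ is close to a spine.

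The core of the argument is then a case analysis on how the labels can change. If there is a regular value $t_0$ carrying neither label $v$ nor $w$, then $S_{t_0} \setminus N(T)$ has an essential compression on each side into the complement $X \setminus N(T)$, but these two compressing disks are disjoint from each other (they live on opposite sides of $S$), contradicting strong irreducibility of $S$ — unless the failure of both labels is ``localized'' to a single component of $S_{t_0}\setminus N(T)$, which is precisely the strongly irreducible component of the second bullet. If instead the labels $v$ and $w$ are never simultaneously absent but also never both present, then there is some critical time $t_*$ (at a saddle tangency of $S_t$ with $T$) across which the label flips from $v$ to $w$; at that saddle moment $S_{t_*}$ meets $T$ in a single saddle tangency, and one checks that the components of $S_{t_*} \setminus N(T)$ are all incompressible in their respective pieces of $X \setminus N(T)$ — this gives the third bullet. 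The remaining possibility is that some regular $t$ carries both labels, which, as in Rubinstein--Scharlemann, yields (after an innermost/outermost disk cleanup using irreducibility of $X$ and incompressibility of $\partial X$ and of $T$) an isotopy of $S$ making every complementary component incompressible — the first bullet.

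Throughout I would use incompressibility of $T$ (to push loops of $S_t \cap T$ that are inessential on $T$ across $T$ and reduce intersections, as in the proof of Lemma~\ref{l:IncompressibleTorusIntersection}), irreducibility of $X$ (to kill trivial $2$-spheres produced in the disk-swapping), and the hypothesis that $T$ is non-boundary-parallel and $\partial X$ incompressible (so that a boundary-parallel complementary component cannot masquerade as an essential compression and so that $\partial$-compressions are not an issue). The main obstacle I anticipate is the bookkeeping in the mixed-label case: one must show that the two disjoint compressing disks coming from the $v$- and $w$-labels can be taken to meet $T$ (and the rest of $S_t$) in a controlled way, so that the innermost-disk surgeries producing the genuine isotopy of $S$ do not destroy the Heegaard structure; this is the step that forces the ``exactly one strongly irreducible component'' escape hatch in the second bullet and is where the argument is genuinely delicate. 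Since the statement is quoted from \cite{Bachman2006}, I would ultimately cite that paper for the full details of this case analysis rather than reproduce it.
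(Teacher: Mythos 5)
The paper does not prove this lemma: it is stated with the citation \cite{Bachman2006}, and no argument is given in the present paper. So there is no ``paper proof'' to compare against, and I can only assess your sketch on its own terms. The high-level strategy you describe --- a Rubinstein--Scharlemann sweepout of $X$ by parallel copies of $S$ together with a two-sided labelling of regular values and a case analysis on how the labels change --- is indeed the right method for a statement of this type and matches the approach in the cited reference, which you correctly say you would ultimately cite for the details.

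That said, your case analysis is internally scrambled, and the scramble traces back to a nonstandard labelling. In the usual scheme one labels a regular value $t$ with $V$ if \emph{some} component of $S_t \setminus N(T)$ admits a compressing disk on the $V$ side of $X \setminus N(T)$, and dually with $W$. With that convention the dichotomy runs opposite to what you wrote: a value carrying \emph{neither} label has no compressing disk on either side, which is precisely the all-incompressible configuration of the first bullet; a value carrying \emph{both} labels produces disjoint compressing disks for $S_t$ on opposite sides, which either contradicts strong irreducibility of $S$ outright or, when both disks compress the same component of $S_t \setminus N(T)$, yields the single strongly irreducible piece of the second bullet. Your sketch attributes the strong-irreducibility escape hatch to the ``neither label'' case and the all-incompressible conclusion (after cleanup) to the ``both labels'' case, which inverts the logic; your own ``every piece compresses'' definition of the label does not support the inference ``neither label $\Rightarrow$ compression on each side'' either. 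The saddle-tangency case giving the third bullet you have right. Since you defer to \cite{Bachman2006} anyway, this would not survive contact with the actual argument, but as written the case analysis does not go through.
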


When $T$ is a torus we can rule out the third conclusion of this lemma, and thereby obtain the following corollary:

\begin{cor}
\label{c:StronglyIrreducibleSweepout}
Let $X$ be a compact, irreducible, orientable 3-manifold with $\partial X$ incompressible, if non-empty. Suppose $X=V \cup _S W$, where $S$ is a strongly irreducible Heegaard surface. Suppose further that $X$ contains an essential torus $T$. Then $S$ may be isotoped to be transverse to $T$ with every component of $S \setminus N(T)$ incompressible in the respective submanifold of $X \setminus N(T)$ except for at most one strongly irreducible component.
\end{cor}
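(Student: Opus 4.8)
The plan is to derive Corollary~\ref{c:StronglyIrreducibleSweepout} directly from Lemma~\ref{l:StronglyIrreducibleSweepout} by showing that, when $T$ is an essential torus, the third conclusion of that lemma cannot occur. First I would invoke Lemma~\ref{l:StronglyIrreducibleSweepout} with $X$, $V\cup_S W$, and $T$ as given; note that the hypotheses match, since an essential torus is incompressible, orientable, closed, and (being essential) not boundary-parallel. This immediately puts us in one of the three listed cases, and the first two cases are exactly the conclusion of the corollary (in the first case there are zero strongly irreducible components, in the second case exactly one). So everything reduces to ruling out the third case.

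To rule out the saddle-tangency case, suppose for contradiction that $S$ meets $T$ in a single saddle tangency and is transverse elsewhere. The key point is to examine the intersection pattern $S\cap T$ on the torus $T$ near such a tangency: away from the saddle point, $S\cap T$ is a collection of simple closed curves on $T$, and resolving the saddle in the two possible ways produces, from the local picture, either one circle splitting into two or two circles merging into one. Since $T$ is a torus, a standard Euler characteristic / parity argument shows that a single saddle tangency between two closed surfaces forces $S\cap T$ (regarded as a graph on $T$ with one vertex of valence four) to have particular components, and one can then isotope $S$ across $T$ to remove the saddle, contradicting that $S$ has been isotoped into the position guaranteed by the lemma. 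More precisely, I would argue that a lone saddle tangency on a torus can always be eliminated: either the resolution on one side yields a trivial circle bounding a disk in $T$ that can be used to push $S$ off, reducing the tangency to nothing, or it yields curves allowing an isotopy of $S$ reducing $|S\cap T|$, and in either case $S$ can be made transverse to $T$ with no tangency — returning us to one of the first two conclusions. This is essentially the same ``no single saddle on a torus'' observation used in related work; the point is that the torus is too simple to support an isolated unremovable saddle between it and an embedded surface.

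I expect the main obstacle to be making the saddle-elimination argument fully rigorous: one has to be careful that the curves of $S\cap T$ appearing before and after resolving the saddle interact correctly with the incompressibility of the pieces $S\setminus N(T)$, so that the isotopy removing the saddle does not reintroduce a compression on the wrong side, and one must check that the resulting transverse position still satisfies the incompressibility (and at-most-one-strongly-irreducible) conclusion. The cleanest route is probably to observe that the saddle, together with the local sheets of $S$, together with an innermost disk cut off by a resolved curve on the torus $T$, produces either an isotopy of $S$ reducing intersections with $T$ or else directly exhibits $S$ in one of the first two positions of Lemma~\ref{l:StronglyIrreducibleSweepout}; then conclude.

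\begin{proof}
By Lemma~\ref{l:StronglyIrreducibleSweepout}, applied with the essential torus $T$ in the role of the incompressible, non-boundary-parallel surface, one of its three conclusions holds. The first two conclusions are precisely the statement of the corollary. It therefore suffices to rule out the third conclusion, namely that $S$ meets $T$ in a single saddle tangency (and is transverse elsewhere), with each component of $S\setminus N(T)$ incompressible in its piece of $X\setminus N(T)$.

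Suppose such a position were achieved. Near the saddle, $T$ is divided locally into four quadrants by $S$, and resolving the tangency in the two possible ways replaces the singular intersection by curves on $T$; since $T$ is a torus and the resolution changes the number of components of $S\cap T$ by one, at least one of the two resolved configurations contains a simple closed curve $\gamma\subset T$ bounding a disk $E$ in $T$. Pushing the corresponding sheet of $S$ across $E$ either removes the saddle (if $E$ meets $S\cap T$ only in $\gamma$), yielding a transverse position for $S$, or reduces $|S\cap T|$ via an innermost-disk argument; neither operation creates a compression in any piece of $X\setminus N(T)$, since it is realized by an ambient isotopy of $S$ supported near $E$. Iterating, we arrive at a position in which $S$ is transverse to $T$ with every component of $S\setminus N(T)$ incompressible except for at most one strongly irreducible component; in particular the third conclusion of Lemma~\ref{l:StronglyIrreducibleSweepout} can be replaced by one of the first two. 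This proves the corollary.
\end{proof}
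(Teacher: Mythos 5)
Your overall strategy — invoke Lemma~\ref{l:StronglyIrreducibleSweepout} with $T$ as the essential surface and then rule out the third (saddle-tangency) conclusion — is exactly the paper's, and the topological fact you lean on (a single saddle tangency with a torus cannot have both resolutions consisting entirely of essential curves) is the same fact the paper uses. However, your execution differs from the paper's and introduces gaps it avoids.

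The paper's proof is a direct contradiction: since in the third case every component of $S\setminus N(T)$ is already incompressible, each boundary curve of $S\setminus N(T)$ must be essential on the relevant copy of $T$ (an inessential one, taken innermost, would give a compressing disk). Those boundary curves are precisely the two resolutions of the saddle, so both resolutions would consist of essential curves on the torus, which is impossible. No isotopy or iteration is needed. Your proof instead tries to \emph{modify} $S$ by pushing it across an innermost disk in $T$ and iterating, and this route has two problems you do not address. First, the crucial claim that ``at least one of the two resolved configurations contains a simple closed curve bounding a disk in $T$'' is asserted, not proved; your stated justification (``the resolution changes the number of components by one'') is not by itself sufficient — you need the fact that disjoint essential curves on a torus are parallel and that the two resolutions are homologous in $T$, from which a counting/primitivity argument gives the trivial curve. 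Second, and more seriously, after the push-across isotopy the decomposition of $S\setminus N(T)$ into components changes, so you cannot simply assert that the new pieces remain incompressible (``neither operation creates a compression'' is not obvious when the pieces themselves are different), nor that the resulting position is one of the three positions guaranteed by Lemma~\ref{l:StronglyIrreducibleSweepout}. The paper sidesteps both issues by never moving $S$: it uses the incompressibility hypothesis, already present in case three, to get essential boundary curves and then reaches a contradiction immediately. I'd recommend restructuring your argument along those lines rather than via an iterated isotopy.
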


\begin{proof}
Suppose $S$ has been isotoped to meet $T$ in a single saddle tangency (and transverse elsewhere), with every component of $S \setminus N(T)$ incompressible in the respective submanifold of $X \setminus N(T)$. Then $\bdy (S \setminus N(T))$ must be essential on $S$ and both copies of $T$ on $\bdy N(T)$. This implies that isotoping $S$ by pushing the saddle tangency slightly past $T$ in either direction yields essential curves of intersection on $T$. This is impossible, however, as $T$ is a torus, and resolving a saddle tangency in this way must produce an inessential curve on one side or the other. 
\end{proof}

\begin{lem}
\label{l:StronglyIrreducibleImpliesDistanceOne}
\cite{Bachman2006} Let $M$ be a knot manifold. Let $S$ be a separating, properly embedded, connected surface in $M$ which is strongly irreducible, has non-empty boundary, and is not peripheral. Then either $S$ is $\partial$-strongly irreducible or $\partial S$ is at most distance one from the boundary of some properly embedded surface which is both incompressible and $\partial$-incompressible.
\end{lem}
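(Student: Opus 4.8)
The plan is to assume that $S$ is \emph{not} $\partial$-strongly irreducible and to manufacture the required essential surface by one $\partial$-compression of $S$ followed by ordinary compressions, finishing with an appeal to Lemma~\ref{l:IncompImpliesBdyIncomp}. Unpacking the hypothesis: if $S$ is not $\partial$-strongly irreducible, then either one side of $S$ carries no compressing or $\partial$-compressing disk, or there are disks $D$ and $E$ on opposite sides of $S$ --- say $D\subset V$ and $E\subset W$, where $S=V\cup_S W$ --- each a compressing or $\partial$-compressing disk, with $D\cap E=\emptyset$. Strong irreducibility rules out the first possibility (it supplies compressing disks on both sides), and it also forbids $D$ and $E$ from both being compressing disks; so after relabeling the two sides we may take $D$ to be a $\partial$-compressing disk for $S$ in $V$.

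Because $\partial M$ is a single torus, any properly embedded surface meets $\partial M$ in mutually parallel essential curves together with possibly some inessential circles, so after discarding inessential circles (an incompressible surface with such a circle has a trivial disk component, since $M$ is irreducible) the slope of a surface is unambiguous; moreover a single $\partial$-compression moves the slope a distance at most one in the Farey graph --- the arc $\beta=D\cap\partial M$ together with a subarc of $\partial S$ joining its endpoints is a closed curve on $\partial M$ meeting $\partial S$ in at most one point, and the new boundary curves are parallel to $\partial S$ or to this curve --- while ordinary compressions, being disjoint from $\partial M$, leave the slope unchanged. Now let $S_1$ be $S$ boundary-compressed along $D$; then $S_1$ still meets $\partial M$, $\langle\partial S_1\rangle$ is at distance at most one from $\langle\partial S\rangle$, and $E$ persists (being disjoint from $D$) as a disk for $S_1$. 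Compress $S_1$ maximally along honest compressing disks to obtain an incompressible surface $S'$ with $\langle\partial S'\rangle=\langle\partial S_1\rangle$, still meeting $\partial M$, and let $F_0$ be a component of $S'$ meeting $\partial M$. By Lemma~\ref{l:IncompImpliesBdyIncomp}, $F_0$ is either a $\partial$-parallel annulus or $\partial$-incompressible, and in the latter case we are done: $F_0$ is incompressible, $\partial$-incompressible, and its boundary slope is at distance at most one from $\langle\partial S\rangle$.

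The remaining, and essential, point is to exclude the possibility that \emph{every} component of $S'$ meeting $\partial M$ is a $\partial$-parallel annulus; this is where I would invoke strong irreducibility of $S$ together with the hypothesis that $S$ is not peripheral. Informally: in that case the $\partial$-parallel annuli can be pushed into $\partial M$, so $S$ is carried by one $\partial$-compression and some compressions onto a (possibly empty) closed surface disjoint from $\partial M$; reversing the moves presents $S$ as assembled from that closed surface by tubing along arcs together with a single band reaching to $\partial M$. The co-cores of the tubes, and the pull-back through the one $\partial$-compression of a $\partial$-compressing disk of one of the annuli, then supply compressing and $\partial$-compressing disks for $S$; keeping careful track of which side of $S$ each of these lies on, and using that $E$ survives on the side opposite $D$, one extracts a disjoint pair of compressing disks on opposite sides of $S$, contradicting strong irreducibility --- unless $S$ has degenerated to a $\partial$-parallel surface, contradicting the non-peripheral hypothesis.

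I expect the real labor to be concentrated in that last step: choosing the compressions of $S_1$ coherently (ideally all on the side of $S_1$ not containing $E$) so that $E$ survives to the very end, verifying that the pulled-back disks genuinely sit on opposite sides of $S$ and can be isotoped to be disjoint, and dispatching the low-complexity degenerate configurations --- for instance the case in which $\partial S_1$ consists entirely of inessential circles, which can only occur when $S$ is an annulus in a rather special $M$. The slope estimate and the reduction to Lemma~\ref{l:IncompImpliesBdyIncomp} are then routine once those cases are under control.
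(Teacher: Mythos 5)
The paper does not actually prove this lemma; it is imported from \cite{Bachman2006}, with only the remark that the published proof uses nothing more than that each component of $\partial S$ lies on a torus component of $\partial M$. So there is no internal proof to compare against, and your proposal can only be assessed on its own terms.

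Your outline --- $\partial$-compress once along $D$, compress to incompressibility, invoke Lemma~\ref{l:IncompImpliesBdyIncomp}, and use strong irreducibility and non-peripherality to exclude the all-$\partial$-parallel case --- is a sensible strategy, and the slope bookkeeping (one $\partial$-compression moves the slope at most one, ordinary compressions move it not at all) is correct. But the step you yourself flag as ``the real labor'' is exactly where the argument has genuine gaps, and as presented it does not go through. First, the claim that $E$ ``persists as a disk for $S_1$'' is not automatic: $E \cap D = \emptyset$ guarantees that $E$ is untouched as a disk in $M$, but the $\partial$-compression changes the topology of $S$, and the loop or arc $\partial E \cap S$ can become inessential on $S_1$ (for instance when $S_1$ is forced down to an annulus, or when $\partial E$ cobounds a disk in $S_1$ containing the two copies of $D$); in that case $E$ carries no information about $S_1$, and the later plan of ``pulling back'' disks to contradict strong irreducibility of $S$ never gets started. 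Second, compressing $S_1$ maximally on the side away from $E$ does not obviously yield an incompressible surface: the lemma that one-sided maximal compression produces an incompressible surface requires the surface being compressed to be strongly irreducible, which you have not established for $S_1$ --- and need not be true after a $\partial$-compression; if instead you compress on both sides, $E$ has no reason to survive. Finally, the closing contradiction (``reversing the moves'' and extracting disjoint compressing disks on opposite sides of $S$) is described only impressionistically, without specifying the disks or verifying which side of $S$ each lies on, which is precisely the content one needs. Until these are nailed down, this is a plausible sketch of the right shape rather than a proof.
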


In \cite{Bachman2006} the term {\it knot manifold} was used to refer to a 3-manifold with a single boundary component, which is homeomorphic to a torus. However, the proof of Lemma \ref{l:StronglyIrreducibleImpliesDistanceOne} given in \cite{Bachman2006} only uses the fact that each boundary component of the surface $S$ is contained in a torus component of $\bdy M$.

We now proceed with the proof of the strongly irreducible case of Theorem \ref{main_thm1}. As before, $M$ is a 3-manifold with incompressible boundary, $\phi \colon T_1 \to T_2$ is a gluing map between two torus components of $\bdy M$, $M_\phi$ is the resulting 3-manifold, and  $T$ is the image of $T_1$ in $M_\phi$. Fix a triangulation $\mathcal T$ of $M$ as in Section \ref{s:Normal-like}. Let $S$ be a closed, strongly irreducible surface in $M_\phi$.

By Corollary \ref{c:StronglyIrreducibleSweepout} we can isotope $S$ so that every component of $S'=S \setminus N(T)$ is incompressible in $M_\phi \setminus N(T) \cong M$, with the exception of at most one strongly irreducible component. If, for some $i$, every component of $S'$ that meets $T_i$ can be isotoped into a neighborhood of $T_i$, then we can isotope $S$ in $M_\phi$ to miss $T$ entirely, and the result follows. Otherwise, either by Lemma \ref{l:IncompImpliesBdyIncomp} or by Lemma \ref{l:StronglyIrreducibleImpliesDistanceOne} there is a (possibly disconnected) surface $S''$, which is properly embedded in $M$, is either incompressible and $\bdy$-incompressible or strongly irreducible and $\bdy$-strongly irreducible, and meets both $T_1$ and $T_2$. Furthermore, the slope of $\phi(S'' \cap T_1)$ is at most distance one from the slope of $S'' \cap T_2$ in the Farey graph of $T_2$.

By \cite{Haken} and \cite{BDTS2012} the surface $S''$ can be isotoped to be a complexity zero or one surface in $M$. Let $\mathcal C$ denote the typed compatibility class of $S''$. Then by Lemma \ref{SL2Qlemma}, $\Phi_{\mathcal C} (\langle S'' \cap T_2 \rangle) = \langle S'' \cap T_1 \rangle$. It follows that $\phi \Phi_{\mathcal C}$ translates the slope of $S'' \cap T_2$ at most distance one in the Farey graph of $T_2$, and thus the map $\phi$ has 1-distance zero or one. This concludes the proof of Theorem \ref{main_thm1}.

\section{Heegaard splittings of 3-manifolds with sufficiently complicated JSJ decompositions are amalgamations.}
\label{s:selfamalgamation}

Generalized Heegaard splittings were first introduced by Scharlemann and Thompson in \cite{st:94}. Here we review some of the basic definitions of this theory (articulated with the language and notation given in \cite{gordon}) and provide an application of Theorem \ref{main_thm1}.

\begin{dfn}
A {\it compression body} is a 3-manifold which can be obtained by starting with some closed, orientable, connected surface, $H$, forming the product $H \times I$, attaching some number of 2-handles to $H \times \{0\}$, and capping off all resulting 2-sphere boundary components that are not contained in $H \times \{1\}$ with 3-balls. The boundary component $H \times \{1\}$ is referred to as $\partial _+$. The rest of the boundary is referred to as $\partial _-$.
\end{dfn}

\begin{dfn}
A {\it Heegaard splitting} of a 3-manifold $M$ is an expression of $M$ as a union $V \cup _H W$, where $V$ and $W$ are compression bodies that intersect in a transversally oriented surface $H=\partial _+ V=\partial _+ W$. If $V \cup _H W$ is a Heegaard splitting of $M$ then we say $H$ is a {\it Heegaard surface}. 
\end{dfn}

\begin{dfn}
A complex $\Sigma$ is the {\it spine} of a Heegaard splitting $V \cup _H W$ of $M$ if
	\begin{enumerate}
		\item $\Sigma$ is the union of some subset of $\bdy M$ and a properly embedded graph in $M$.
		\item $H$ is parallel to the frontier of a neighborhood of $\Sigma$ in $M$.
	\end{enumerate}
\end{dfn}

\begin{dfn}
\label{d:GHS}
A {\it generalized Heegaard splitting (GHS)} $\mathcal H$ of a 3-manifold $M$ is a pair of sets of pairwise disjoint, transversally oriented, connected surfaces,  $\thick{\mathcal H}$ and $\thin{\mathcal H}$ (called the {\it thick levels} and {\it thin levels}, respectively), which satisfies the following conditions.
	\begin{enumerate}
		\item Each component $M'$ of $M-\thin{\mathcal H}$ meets a unique element $H_+$ of $\thick{\mathcal H}$, and $H_+$ is a Heegaard surface in $M'$. Henceforth we will denote the closure of the component of $M-\thin{\mathcal H}$ that contains an element $H_+ \in \thick{\mathcal H}$ as $M(H_+)$.
		\item As each Heegaard surface $H_+ \subset M(H_+)$ is transversally oriented, we can consistently talk about the points of $M(H_+)$ that are ``above'' $H_+$ or ``below'' $H_+$. Suppose $H_-\in \thin{\mathcal H}$. Let $M(H_+)$ and $M(H_+')$ be the submanifolds on each side of $H_-$. Then $H_-$ is below $H_+$ if and only if it is above $H_+'$.
		\item There is a partial ordering on the elements of $\thin{\mathcal H}$ which satisfies the following: Suppose $H_+$ is an element of $\thick{\mathcal H}$, $H_-$ is a component of $\partial M(H_+)$ above $H_+$, and $H'_-$ is a component of $\partial M(H_+)$ below $H_+$. Then $H_- > H'_-$.
	\end{enumerate}
\end{dfn}

\begin{dfn}
A GHS $\mathcal H$ of $M$ is {\it strongly irreducible} if every element $H_+\in \thick{\mathcal H}$ is strongly irreducible in $M(H_+)$.
\end{dfn}

\begin{thm}
\label{t:ThinLevelsIncompressible}
\cite{st:94} The thin levels of a strongly irreducible GHS are incompressible.
\end{thm}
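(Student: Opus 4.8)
The plan is to run the classical Scharlemann--Thompson argument (the one underlying \cite{gordon}, \cite{st:94}): I assume for contradiction that some thin level is compressible, and from a compressing disk I manufacture a \emph{weak reduction} of an adjacent thick level --- a pair of compressing disks, one on each side, with disjoint boundaries --- contradicting the hypothesis that every element of $\thick{\mathcal H}$ is strongly irreducible in its submanifold.

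First the set-up. Suppose $P \in \thin{\mathcal H}$ has a compressing disk $D$ in $M$. The surface $P$ separates a regular neighbourhood of itself into two sides, each lying in one of the submanifolds $M(H_+)$, $H_+ \in \thick{\mathcal H}$; since $\bdy D \subset P$ and $\mathrm{int}\,D$ misses $P$, the disk $D$ lies in one such submanifold, say $M(Q)$. Write $M(Q) = A \cup_Q B$ as a union of compression bodies with $Q = \bdy_+ A = \bdy_+ B$, and (relabelling $A,B$ if necessary) arrange that $P$ is the component of $\bdy_- B$ toward which $D$ points, so $D \subset M(Q)$ and $\bdy D \subset P \subset \bdy_- B$. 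Now isotope $D$ so that it is transverse to $Q$ with $|D \cap Q|$ minimal over all compressing disks for $P$ in $M(Q)$; by the usual innermost-disk surgeries I may also assume no loop of $D \cap Q$ bounds a disk in $Q$. If $D \cap Q = \emptyset$ then $D$ is a compressing disk for $P$ lying entirely in $B$, which is impossible because $\bdy_- B$ is incompressible in the compression body $B$. Hence $D \cap Q \neq \emptyset$.

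The core step is to convert $D$ into a weak reduction of $Q$. Let $R$ be the component of $D$ cut along $D \cap Q$ that contains $\bdy D$; then $R \subset B$ is a properly embedded planar surface one of whose boundary curves is $\bdy D$ (essential on $P$) while the remaining boundary curves lie on $Q$ and are essential there. Using the compression-body structure of $B$, I would produce a compressing disk $\Delta_B$ for $Q$ in $B$ that is disjoint from $R$, so in particular $\bdy \Delta_B$ is disjoint from $R \cap Q$. On the other hand, an innermost loop of $D \cap Q$ on $D$ cuts off a subdisk of $D$ lying entirely in $A$ or entirely in $B$, and --- choosing that loop compatibly with $R$, and using the minimality of $|D \cap Q|$ --- I would obtain a compressing disk $\Delta_A$ for $Q$ in $A$ with $\bdy \Delta_A$ disjoint from $\bdy \Delta_B$. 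Then $\{\Delta_A,\Delta_B\}$ is a weak reduction of the Heegaard surface $Q$ in $M(Q)$, contradicting strong irreducibility of the GHS. Since $P$ was an arbitrary thin level, all thin levels are incompressible.

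The main obstacle is precisely the construction in the previous paragraph: extracting compressing disks for $Q$ on the two sides is routine, but arranging their boundary curves to be \emph{disjoint} --- getting a genuine weak reduction rather than merely two compressing disks --- is delicate, and it is exactly the content of the standard fact that a strongly irreducible Heegaard splitting of a compact 3-manifold with boundary has incompressible negative boundary. An alternative is simply to invoke that fact for the splitting $A \cup_Q B$ of $M(Q)$, concluding directly that $\bdy_- B$, hence $P$, is incompressible in $M(Q)$; one then only has to observe that $D$ was pushed into $M(Q)$ on whichever side of $P$ it lay, so the conclusion covers both cases.
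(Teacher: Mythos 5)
The paper does not prove this theorem; it is cited from \cite{gordon} and \cite{st:94}. So I can only assess your argument on its own terms, and there is a genuine gap at the very start.

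You write that since $\bdy D \subset P$ and $\mathrm{int}\,D$ misses $P$, ``the disk $D$ lies in one such submanifold, say $M(Q)$.'' That does not follow. A compressing disk for a thin level $P$ is required to have interior disjoint from $P$, but nothing stops its interior from crossing \emph{other} thin levels of $\mathcal H$; in a GHS with more than one thin level, $D$ can wander through several of the submanifolds $M(H_+)$, and the entire localization to a single compression-body splitting $A \cup_Q B$ collapses. The repair is standard but must be stated: among all compressing disks for all elements of $\thin{\mathcal H}$, choose one minimizing $|\mathrm{int}\,D \cap \thin{\mathcal H}|$. An innermost loop of $\mathrm{int}\,D\cap\thin{\mathcal H}$ on $D$ bounds a subdisk $E$ with $\mathrm{int}\,E$ disjoint from $\thin{\mathcal H}$; if $\bdy E$ is inessential on the thin level carrying it, irreducibility of $M$ lets you swap it off and reduce the count, while if $\bdy E$ is essential you have found a compressing disk for a (possibly different) thin level lying wholly inside one $M(Q)$. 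Only after this step does the rest of your argument get off the ground. Note also that this step requires irreducibility of $M$, a hypothesis you never invoke but genuinely need.

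Once you are reduced to a compressing disk for $P$ inside a single $M(Q) = A\cup_Q B$ with $P\subset \bdy_- B$, your conclusion is correct, and your closing remark is the right one: this is exactly the statement that a strongly irreducible Heegaard splitting has incompressible $\bdy_-$, and invoking that fact directly is cleaner than the sketch of the weak-reduction construction you give in the middle paragraph. As written, that middle sketch also leaves the existence of a compressing disk $\Delta_B$ for $Q$ in $B$ disjoint from $R$ unjustified (it can fail, e.g., when $B$ is a trivial compression body), but since you ultimately fall back on the standard fact this is a presentational issue rather than a second gap. The one real defect is the missing reduction to a single block of the GHS.
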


\begin{cor}
\label{c:descendendant}
Let $M$ be a 3-manifold with incompressible boundary and let $T_1$ and $T_2$  be two torus components of $\partial M$. Let $\phi \colon T_1 \to T_2$ be a gluing map whose 1-distance is at least two, and let $T$ be the image of $T_1$ in $M_\phi$. Then $T$ is isotopic to a thin level of every strongly irreducible GHS of $M_\phi$.
\end{cor}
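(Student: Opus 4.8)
The plan is to take an arbitrary strongly irreducible GHS $\mathcal H$ of $M_\phi$, to isotope all of its thin and thick levels off of $T$, and then to recognize $T$ as parallel to one of the thin levels. Before starting, note that a routine innermost--disk argument, using that $M$ is irreducible and that $T_1$, $T_2$ and the remaining components of $\partial M$ are incompressible, shows that $M_\phi$ is irreducible and that $T$ and every component of $\partial M_\phi$ are incompressible in $M_\phi$. I will also assume that $T$ is not boundary-parallel in $M_\phi$; this is automatic when $M$ is connected, and in any case a surface parallel into $\partial M_\phi$ requires only a separate easy treatment.

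The first step is to push the thin levels off of $T$. By Theorem~\ref{t:ThinLevelsIncompressible} the thin levels of $\mathcal H$ are incompressible in $M_\phi$, hence so is their (possibly disconnected) union $\Theta$. The incompressible case of Theorem~\ref{main_thm1} is in fact proved for an arbitrary closed incompressible surface and uses only that the $1$-distance of $\phi$ is at least one, so $\Theta$ may be isotoped to be disjoint from $T$. After this isotopy $T$ lies in the closure $M(H_0)$ of a single component of $M_\phi\setminus\Theta$, for some thick level $H_0$. The components of $\partial M(H_0)$ are (copies of) thin levels and components of $\partial M_\phi$, all incompressible in $M_\phi$, so $M(H_0)$ is a compact, orientable, irreducible $3$-manifold with incompressible boundary, and $H_0$ is a strongly irreducible Heegaard surface for it. Also $T$ is incompressible in $M(H_0)$, being incompressible in $M_\phi$. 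If $T$ is boundary-parallel in $M(H_0)$, then it is parallel to a component of $\partial M(H_0)$; a parallel copy of a component of $\partial M_\phi$ is excluded by assumption, so in that case $T$ is parallel to a thin level and we are done. Hence we may assume $T$ is essential in $M(H_0)$.

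The heart of the argument is to isotope $H_0$ off of $T$, which I would carry out by rerunning the proof of the strongly irreducible case of Theorem~\ref{main_thm1} with $M(H_0)$ in place of $M_\phi$ and $H_0$ in place of $S$. Since $M(H_0)$ is irreducible with incompressible boundary and $T$ is an essential torus in it, Corollary~\ref{c:StronglyIrreducibleSweepout} lets us isotope $H_0$ transverse to $T$ so that every component of $H_0\setminus N(T)$ is incompressible in the relevant piece of $M(H_0)\setminus N(T)$ except for at most one strongly irreducible component; choose such a position minimizing $|H_0\cap T|$ and suppose, for contradiction, that $H_0\cap T\neq\emptyset$. Removing trivial loops (using incompressibility of the pieces and irreducibility of $M(H_0)$), we may assume every loop of $H_0\cap T$ is essential on $T$. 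Because $M(H_0)\setminus N(T)$ sits inside $M_\phi\setminus N(T)=M$ and is bounded there by surfaces (thin levels, parts of $T_1\cup T_2$, parts of $\partial M_\phi$) that are incompressible in $M$, the components of $H_0\setminus N(T)$ are properly embedded surfaces in $M$, with all boundary curves on $T_1\cup T_2$, incompressible in $M$ (all but at most one, which is strongly irreducible in $M$); moreover their union meets both $T_1$ and $T_2$, since each essential loop of $H_0\cap T$ contributes one boundary curve to $T_1$ and one to $T_2$. Discarding boundary-parallel annuli — whose presence would contradict the minimality of $|H_0\cap T|$ — and then applying Lemmas~\ref{l:IncompImpliesBdyIncomp} and~\ref{l:StronglyIrreducibleImpliesDistanceOne} exactly as in the proof of Theorem~\ref{main_thm1}, we obtain a surface $S''$ in $M$ that meets both $T_1$ and $T_2$, is either incompressible and $\partial$-incompressible or strongly irreducible and $\partial$-strongly irreducible, and whose boundary slopes on $T_1$ and $T_2$ lie within Farey distance one of $\langle H_0\cap T_1\rangle$ and $\langle H_0\cap T_2\rangle$ respectively. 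By \cite{Haken} and \cite{Bachman98} we may assume $S''$ has complexity at most one; letting $\mathcal C$ be its typed compatibility class, Lemma~\ref{SL2Qlemma} gives $\Phi_{\mathcal C}(\langle S''\cap T_2\rangle)=\langle S''\cap T_1\rangle$. On the other hand, since $M_\phi$ is obtained by gluing $T_1$ to $T_2$ via $\phi$, the curves $H_0\cap T_1$ and $H_0\cap T_2$ are the two preimages of $H_0\cap T$, so $\phi(\langle H_0\cap T_1\rangle)=\langle H_0\cap T_2\rangle$. Combining the last three sentences, $\phi\Phi_{\mathcal C}$ moves the slope $\langle S''\cap T_2\rangle$ a Farey distance of at most one, so $\phi$ has $1$-distance at most one, contradicting the hypothesis. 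Therefore $H_0\cap T=\emptyset$.

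At this point $T$ is disjoint from every thin and every thick level of $\mathcal H$, so it lies in one of the two compression bodies, say $V_0$, of $M(H_0)=V_0\cup_{H_0}W_0$; being incompressible in $V_0$, it is parallel to a component of $\partial V_0$. It cannot be parallel to $\partial_+V_0=H_0$: otherwise $H_0$ would be an incompressible torus, which forces both $V_0$ and $W_0$ to be trivial compression bodies, so that $V_0\cup_{H_0}W_0$ has no compressing disk on either side and is not strongly irreducible. It cannot be parallel to a component of $\partial M_\phi$, by our standing assumption. Hence $T$ is parallel to a component of $\partial_-V_0$ that is a thin level of $\mathcal H$, which is exactly what we wanted. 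The step I expect to be the main obstacle is the middle one: relativizing the proof of the strongly irreducible case of Theorem~\ref{main_thm1} to the submanifold $M(H_0)$, and in particular verifying that all of the surgeries and isotopies applied to $H_0\setminus N(T)$ may legitimately be regarded as taking place in the fixed triangulated manifold $M$ — on whose triangulation the complexity, the typed compatibility classes, and the $1$-distance were all defined.
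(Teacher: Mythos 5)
Your proof follows the same three-step outline as the paper's: first isotope the thin levels off $T$ via the incompressible case of Theorem~\ref{main_thm1}, then isotope the thick level $H_0$ of the piece $M(H_0)$ containing $T$ off $T$ via the strongly irreducible case, then observe that $T$ now lies in a compression body and hence is parallel to a negative boundary component, i.e.\ a thin level. You have, however, done noticeably more work than the paper in the middle step. The paper simply says ``we may again apply Theorem~\ref{main_thm1},'' but that theorem is stated for strongly irreducible surfaces of the glued manifold $M_\phi$, whereas $H_0$ is a strongly irreducible Heegaard surface only of the submanifold $M(H_0)$; as you correctly observe, one must actually rerun the strongly irreducible argument relative to $M(H_0)$, then transfer incompressibility and strong irreducibility of the pieces of $H_0\setminus N(T)$ from $M(H_0)\setminus N(T)$ to the full triangulated manifold $M$ (this transfer works because $\partial M(H_0)$ consists of thin levels and pieces of $\partial M_\phi$, all incompressible in $M$, so compressing disks in $M$ can be pushed into the submanifold). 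You also handle two boundary cases the paper elides: you rule out $T$ being parallel to $H_0$ itself, and you rule out $T$ being parallel to a component of $\partial M_\phi$. Your self-diagnosis of the potential obstacle is correct, but it is an obstacle the paper's own terse argument has as well; your version spells out what must actually be checked. The argument is sound.
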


\begin{proof}
Let $\mathcal H$ be a strongly irreducible GHS of $M_\phi$. By Theorem \ref{t:ThinLevelsIncompressible}, the thin levels of $\mathcal H$ are incompressible. By Theorem \ref{main_thm1}, such surfaces can be isotoped to be disjoint from $T$. But then $T$ lies in $M(H_+)$, for some thick level $H_+$. As $H_+$ is strongly irreducible we may again apply Theorem \ref{main_thm1} to conclude $T$ can be isotoped be disjoint from $H_+$ in $M(H_+)$. But then $T$ lies in a compression body. The only incompressible surfaces
in a compression body are isotopic to negative boundary components, and all such components are thin levels of $\mathcal H$.
\end{proof}


The GHS $\mathcal H$ determines a unique Heegaard splitting of $M$, called its {\it amalgamation}, a generalization of a procedure with the same name given in \cite{Schultens93}.  The definition provided here follows the version given in \cite{gordon}, and is illustrated schematically in Figure \ref{f:amalgamation}. The dark lines in the figure represent a complex which is the spine of a Heegaard splitting of the manifold $M$ pictured. This complex has three parts. The first are those subsurfaces of $\bdy M$ that lie below thick levels. The second part is a collection of loops that lies entirely in compression bodies below thick levels. Finally, the third part is a collection of vertical arcs that connect the other two parts through compression bodies that lie above thick levels.

\begin{figure}
\[\includegraphics[width=4 in]{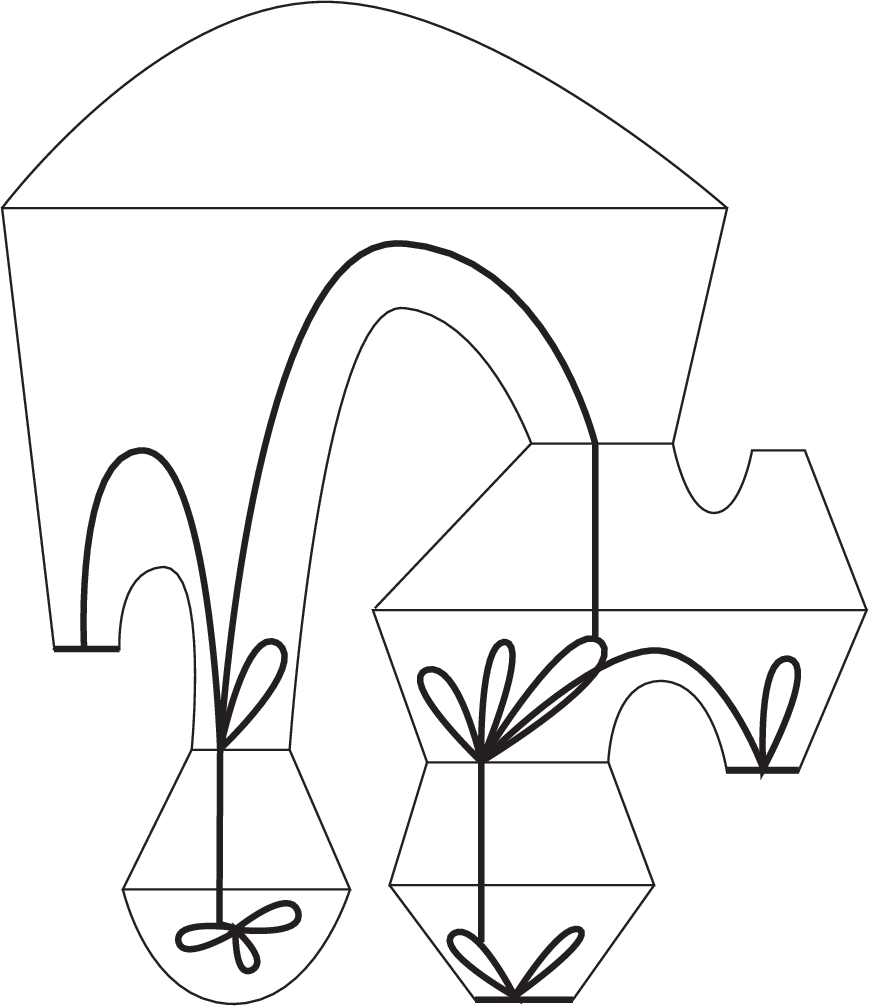}\]
\caption{The wider horizontal lines are the thick levels of a GHS, the narrower horizontal lines represent its thin levels, and the darker arcs represent the spine of its amalgamation.}
\label{f:amalgamation}
\end{figure}

The following result about amalgamation is implicit in \cite{st:94}:

\begin{lem}
\label{l:StronglyIrreducibleAmalgamation}
Suppose $M$ is a compact, orientable 3-manifold with incompressible boundary. Then every Heegaard splitting of $M$ is the amalgamation of some strongly irreducible GHS.
\end{lem}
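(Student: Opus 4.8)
The plan is to realize the given Heegaard splitting as the amalgamation of a strongly irreducible GHS produced by the untelescoping (weak reduction) construction of Scharlemann and Thompson \cite{st:94}, exploiting the fact --- made explicit in \cite{gordon}, \cite{Lackenby2008} --- that amalgamation is the inverse operation to weak reduction. Write the splitting as $M=V\cup_H W$ and induct on the genus $g$ of $H$ (for the induction it is convenient, and harmless, to allow $M$ to be an arbitrary compact orientable $3$-manifold, dropping the incompressible-boundary hypothesis, since the pieces arising below may have compressible boundary). If $H$ is strongly irreducible, the GHS $\mathcal H$ with $\thick{\mathcal H}=\{H\}$ and $\thin{\mathcal H}=\emptyset$ satisfies Definition~\ref{d:GHS} (conditions (2) and (3) being vacuous), is strongly irreducible by definition, and is its own amalgamation; the degenerate cases in which $V$ or $W$ is a trivial compression body are handled the same way. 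So assume $H$ is weakly reducible, with nonempty disjoint compressing-disk systems $\Delta_V\subset V$ and $\Delta_W\subset W$ satisfying $\bdy\Delta_V\cap\bdy\Delta_W=\emptyset$ on $H$.

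First I would carry out a single weak reduction: let $H_-$ be the result of compressing $H$ along $\Delta_V\cup\Delta_W$, and let $H_V$, $H_W$ be the results of compressing $H$ along $\Delta_V$ alone, respectively along $\Delta_W$ alone. After discarding sphere components bounding balls and passing to connected components, this exhibits $M$ as a union of compression bodies with thick levels among $\{H_V,H_W\}$ and thin levels the components of $H_-$. I would then verify Definition~\ref{d:GHS} for this data: each region cut off by the thin levels carries $H_V$ or $H_W$ as a Heegaard surface; the transverse orientations inherited from a sweepout of $H$ place $H_-$ below $H_V$ and above $H_W$ as required in (2); and the linear ``height'' order of that sweepout descends to the partial order of (3). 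Since $\Delta_V$ and $\Delta_W$ are each nonempty, $g(H_V)<g$ and $g(H_W)<g$.

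Next I would apply the inductive hypothesis to $H_V$ in its piece $M(H_V)$ and to $H_W$ in $M(H_W)$, replacing each of these thick levels by a strongly irreducible GHS of the respective piece; reassembling across the thin levels $H_-$ yields a GHS $\mathcal H$ of $M$ all of whose thick levels are strongly irreducible in their pieces, i.e.\ a strongly irreducible GHS. To finish I would identify its amalgamation with $H$: since amalgamating a GHS across a single thin level exactly undoes one weak reduction, running this up the untelescoping tree gives, by induction, that the GHS of $M(H_V)$ amalgamates to $H_V$ and that of $M(H_W)$ to $H_W$, and then amalgamating those two across $H_-$ returns $H$. Hence $H$ is the amalgamation of the strongly irreducible GHS $\mathcal H$.

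The step I expect to be the main obstacle is the verification that one honest weak reduction yields data satisfying all three conditions of Definition~\ref{d:GHS} --- the above/below consistency, the existence of the partial order, and the correct treatment of disconnected or spherical thin levels --- together with the bookkeeping showing that amalgamation inverts weak reduction. These points are standard (and are essentially the content of \cite{st:94}, \cite{gordon}, \cite{Lackenby2008}), but they are the only genuinely nontrivial part; everything else is the genus induction.
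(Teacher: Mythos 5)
There is no proof in the paper to compare against: the lemma is called ``widely known'' and deferred to \cite{gordon}. Your argument is the standard one behind that citation --- genus induction, with the inductive step given by a single untelescoping (weak reduction) in the sense of Scharlemann--Thompson \cite{st:94}, plus the fact (made precise in \cite{gordon}, \cite{Lackenby2008}) that amalgamation inverts weak reduction --- and it is correct. Two small points, both of which you essentially flag yourself. Dropping the incompressible-boundary hypothesis for the induction is not merely convenient but necessary, since $M(H_V)$ and $M(H_W)$ are bounded in part by components of $H_-$, which need not be incompressible at intermediate stages; the lemma does not in fact need the hypothesis. And the ``discard sphere thin levels bounding balls'' step is forced rather than optional: a sphere Heegaard surface of a ball has no compressing disks on either side, so it fails the paper's definition of strongly irreducible, and such pieces must therefore be absorbed into a neighbor before the inductive hypothesis can be invoked; doing so uses irreducibility of the relevant piece, which holds in every application in the paper. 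Neither point affects the substance --- your proof is sound and matches the approach in the references the paper cites.
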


\begin{dfn}
Suppose $\{M_i\}$ is a collection of 3-manifolds, and $M$ is obtained from this collection by gluing pairs of boundary components by a collection of maps $\{\phi_j\}$. Let $\mathcal H$ be a GHS of $M$ whose thin levels are precisely the images of the boundary components of $\{M_i\}$ in $M$. Then the thick levels of $\mathcal H$ are a set $\{H_i\}$, where $H_i$ is a Heegaard surface in $M_i$. The amalgamation $H$ of $\mathcal H$ is then said to be the {\it amalgamation of the Heegaard surfaces $H_i$} over the maps $\{\phi_j\}$.
\end{dfn}

Now we extend the notion of $c$-distance to gluings along multiple tori.

\begin{dfn}
\label{d:Generalizedc-distance}
Let $\mathbf T$ denote a collection of pairwise disjoint, pairwise non-parallel, essential tori in a compact, orientable 3-manifold $M$. Let $\{T_i\}_{i=1}^n$ be an ordering of the elements of $\mathbf T$. Let $M_0$ denote the manifold obtained from $M$ by cutting along all tori in $\mathbf T$. Let $T_i'$ and $T_i''$ denote the copies of $T_i$ on $\bdy M_0$. Let $M_i$ denote the manifold obtained from $M_{i-1}$ by the gluing $\phi_i \colon T_i' \to T_i''$, defined so that the manifold $M_n \cong M$. Then we say the ordered collection $\{T_i\}$ has $c$-distance $d$ if there is a triangulation of each manifold $M_i$ so that the minimum of the $c$-distances of the maps $\phi_i$ is precisely $d$. The $c$-distance of $\mathbf T$ is the maximum $c$-distance taken over all of its orderings $\{T_i\}$.
\end{dfn}
 
The following is a celebrated theorem of 3-manifold topology. 

\begin{thm} \cite{JacoShalen}, \cite{Johannson2}
\label{d:JSJdecomposition}
Let $M$ be a closed, irreducible, orientable 3-manifold. Then there exists a minimal, pairwise disjoint union $\mathbf T$ of tori embedded in $M$ such that each component of $M - \mathbf T$ is Seifert fibered or atoroidal. 
\end{thm}

\begin{dfn}
The decomposition of $M$ given in Theorem~\ref{d:JSJdecomposition} is called the {\em JSJ decomposition} of $M$, and $\mathbf T$ is called the {\em collection of JSJ tori} in $M$. 
\end{dfn}



We are now ready to prove Theorem~\ref{JSJtheorem}. 

\begin{proof}
Let $H$ be a Heegaard surface of $M$, and let $\mathbf T$ denote the collection of JSJ tori in $M$. Let $\{T_i\}_{i=1}^n$ be an ordering of the JSJ tori realizing the 1-distance of $\mathbf T$, and let $\{M_i\}$ be the corresponding submanifolds as given in Definition \ref{d:Generalizedc-distance}. Now for some $i$, $H$ is an amalgamation of splittings of the components of the manifold $M_i$, where at worst $i=n$. Let $m$ be the smallest value of $i$ such that $H$ arises from an amalgamation of splittings of the components of $M_m$ over the maps $\{\phi_i\}_{i=m+1}^n$. We claim $m=0$, and thus the result follows.

Let $H_m$ denote the (possibly disconnected) Heegaard surface of $M_m$ whose amalgamation over the maps $\{\phi_i\}_{i=m+1}^n$ is $H$. By Lemma \ref{l:StronglyIrreducibleAmalgamation} $H_m$ is the amalgamation of a strongly irreducible GHS $\mathcal H_m$ of $M_m$.

By definition, $M_m$ is obtained from $M_{m-1}$ by gluing two components of $\bdy M_{m-1}$ by a map $\phi_m:T_m' \to T_m''$ whose $1$-distance is at least two. The image of $T_m'$ in $M_m$ is an essential torus $T_m$. By Corollary \ref{c:descendendant}, $T_m$ is a thin level of $\mathcal H_m$. We may thus obtain a strongly irreducible GHS $\mathcal H_{m-1}$ of $M_{m-1}$ from $\mathcal H_m$ by replacing $T_m$ in $\thin{\mathcal H_m}$ by the set $\{T_m',T_m''\}$.

Let $H_{m-1}$ denote the amalgamation of $\mathcal H_{m-1}$. Then by definition, $H_m$ is the amalgamation of the components of $H_{m-1}$ along the map $\phi_m$. It follows that $H$ is the amalgamation of the components of $H_{m-1}$ along the maps $\{\phi_i\}_{i=m}^n$. We have thus contradicted the minimality of our choice of $m$.
\end{proof}

A consequence of Theorem~\ref{JSJtheorem} is that 3-manifolds admitting ``sufficiently complicated'' JSJ decompositions in the above sense have only finitely many Heegaard splittings of a given genus, and thus do not satisfy the converse of the generalized Waldhausen Conjecture. 

\begin{cor}
\label{c:finiteness}
Let $M$ be a closed, irreducible, orientable 3-manifold that admits a JSJ decomposition whose collection of JSJ tori has 1-distance at least 2. Then for every positive integer $g$, $M$ admits at most finitely many Heegaard splittings of genus $g$ up to isotopy.
\end{cor}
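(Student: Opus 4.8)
The plan is to deduce finiteness from Theorem~\ref{JSJtheorem} together with the known finiteness of Heegaard splittings of bounded genus in the individual JSJ pieces; the crucial point is that amalgamating Heegaard surfaces along tori does not increase the genus of any piece.

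Write $\mathbf T$ for the collection of JSJ tori of $M$ and $M_1,\dots,M_k$ for the JSJ pieces, i.e.\ the closures of the components of $M-\mathbf T$. Each $M_i$ is compact, orientable and irreducible, and is Seifert fibered or atoroidal; if $\mathbf T\ne\emptyset$ then each $M_i$ has non-empty boundary consisting of tori (the case $\mathbf T=\emptyset$ is handled at the end). Fix $g$ and let $H$ be a genus-$g$ Heegaard surface of $M$. By Theorem~\ref{JSJtheorem}, $H$ is the amalgamation over the gluing maps of a strongly irreducible GHS $\mathcal H$ of $M$ whose thin levels are the tori of $\mathbf T$ and whose thick levels are connected Heegaard surfaces $H_1,\dots,H_k$ with $H_i\subset M_i$. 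Hence the set of isotopy classes of genus-$g$ Heegaard surfaces of $M$ is contained in the set of isotopy classes of amalgamations of such GHS's, and it suffices to show the latter set is finite.

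First I would bound the pieces: $g(H_i)\le g$ for every $i$ (more than enough: any bound $g(H_i)\le g+C$ with $C=C(M)$ depending only on the JSJ decomposition would do). This follows from the amalgamation genus formula (Schultens \cite{Schultens93}, see also \cite{gordon}, \cite{st:94}) together with the observation that every thin level here is a torus and so contributes nothing to the governing Euler-characteristic count; equivalently, one may take $\mathcal H$ to be an untelescoping of $H$, whose thick levels all have genus at most $g(H)=g$, and the un-gluing procedure in the proof of Theorem~\ref{JSJtheorem} does not change the thick levels. Next I would invoke the deep input: each $M_i$ is irreducible with non-empty boundary and is not a ball, hence is Haken, and a Haken $3$-manifold contains only finitely many isotopy classes of Heegaard surfaces of any bounded genus (the resolution of the Waldhausen conjecture, together with its generalization to the atoroidal case; for Seifert fibered pieces it also follows from the classification of their Heegaard splittings). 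So each $M_i$ admits only finitely many isotopy classes of Heegaard surfaces of genus at most $g$, and since there are finitely many $M_i$, there are finitely many isotopy classes of tuples $(H_1,\dots,H_k)$.

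Finally, a fixed tuple $(H_1,\dots,H_k)$ together with the fixed gluing data along $\mathbf T$ determines only finitely many strongly irreducible GHS's of $M$ up to isotopy---the remaining data, namely the transverse orientations and the compatible partial order of Definition~\ref{d:GHS}, being a finite combinatorial choice---hence only finitely many amalgamations up to isotopy. Thus $M$ has only finitely many genus-$g$ Heegaard surfaces up to isotopy. When $\mathbf T=\emptyset$ the manifold $M$ is itself Seifert fibered or atoroidal, and the statement reduces to the already-known finiteness of bounded-genus Heegaard splittings of such closed manifolds. The only genuinely mathematical step internal to this argument is the bound $g(H_i)\le g$; everything else is bookkeeping. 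The real obstacle lies outside the proof: it is the deep finiteness theorem for bounded-genus Heegaard splittings of the JSJ pieces, used here as a black box. Note also that Theorem~\ref{JSJtheorem} is applied only in the form ``every Heegaard splitting is an amalgamation,'' so surjectivity onto genus-$g$ splittings, plus finiteness of the pool of amalgamations, suffices---no injectivity of the amalgamation construction is needed.
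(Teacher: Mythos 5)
Your proof is correct and follows essentially the same route as the paper's: reduce via Theorem~\ref{JSJtheorem} to the finiteness of bounded-genus Heegaard surfaces in the JSJ pieces, which is then cited as known (the paper references \cite{LustigMoriah1991}, \cite{Schultens95} for the Seifert fibered pieces and \cite{Johannson} for the atoroidal ones, rather than invoking the generalized Waldhausen conjecture by name). You make explicit the genus bound $g(H_i)\le g$ and replace the paper's appeal to ``amalgamation is unique up to isotopy'' with ``a fixed tuple yields only finitely many amalgamations,'' both of which are fine and, if anything, slightly more careful than the paper's terse write-up.
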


\begin{proof}
By Theorem~\ref{JSJtheorem}, every Heegaard splitting of $M$ is an amalgamation of Heegaard surfaces of the components of the JSJ decomposition of $M$. Since the amalgamation of Heegaard surfaces is unique up to isotopy, we only need to verify that each component of the JSJ decomposition admits finitely many Heegaard surfaces of a given genus up to isotopy. This follows from \cite{LustigMoriah1991} and \cite{Schultens95} in the case that a component is Seifert fibered, and from \cite{Lackenby2008} (see page 2) in the case a component is atoroidal (see also Theorem 32.17 in \cite{Johannson}). \end{proof}

\section{Gluing maps with $c$-distance at least one}
\label{s:MainTheoremSection}

In this final section, we prove Theorem~\ref{main_thm2}. Recall that $M$ is a 3-manifold with tori $T_1$ and $T_2$ contained in its boundary. To prove the theorem, we exploit the fact that homeomorphisms between $T_1$ and $T_2$ can be considered as matrices in $SL_2(\mathbb Z)$. Therefore, to show that a gluing map $\sigma^n \psi$ has $c$-distance at least one, it suffices to show that for each $c$, the corresponding element in $SL_2(\mathbb Z)$ composed with each $\Phi_{\mathcal C}$ from Lemma~\ref{SL2Qlemma} fixes no slopes. We do this by first establishing several linear algebraic results.

\begin{dfn}
Let $L \in SL_2(\mathbb Q)$, written in matrix form.  Then the {\it denominator} of $L$, denoted $d(L)$, is the least integer $d \geq 1$ so that all entries  of the matrix $d L$ are integers.
\end{dfn}

\begin{dfn}
Let  $L \in SL_2(\mathbb R)$, and let $\vec v = \left[ \begin{array}{c} v_0 \\ v_1 \end{array} \right]$ be an eigenvector of $L$.  Then $r = v_0/v_1 \in  \widehat  {\mathbb R} = \mathbb R \cup \{ \infty \}$ is an {\it eigenslope} of $L$.  An eigenslope $r$ is {\it rational} if $r \in \widehat {\mathbb Q} = \mathbb Q \cup \{ \infty \}$.
\end{dfn}

\begin{lem}
\label{lemDenominator}
Suppose that  $L \in SL_2(\mathbb Q)$, $r$ and $s$ are non-zero integers, and $\vec u$ and $\vec v $ are vectors with coprime integer entries so that $L r \vec u = s \vec v$.    Then $d(L) \geq |s/r| \geq 1/d(L)$, where $d(L)$ is the denominator of $L$.
\end{lem}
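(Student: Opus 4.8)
The plan is to clear denominators so that the statement becomes a divisibility assertion about gcd's of integer vectors. Set $d = d(L)$ and $L' = dL$, so that $L'$ is an integer matrix with $\det L' = d^2$ (using $\det L = 1$). Multiplying the hypothesis $Lr\vec u = s\vec v$ through by $d$ gives the integral relation
\[
r\,L'\vec u = ds\,\vec v .
\]

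Next I would read off the scalar content of this relation. Since $\vec v$ is primitive, the gcd of the entries of $ds\,\vec v$ is $|ds|$; and if $g$ denotes the gcd of the entries of the integer vector $L'\vec u$ (which is nonzero, since $L'$ is invertible and $\vec u \neq 0$), then the gcd of the entries of $r\,L'\vec u$ is $|r|\,g$. Hence $|r|\,g = |ds|$, i.e. $|s/r| = g/d$, and the lemma reduces to proving $1 \le g \le d^2$.

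The lower bound $g \ge 1$ is trivial; the upper bound $g \le d^2$ is the one substantive step, and it is where I expect the effort to lie. Here I would use that $\vec u$ is itself primitive. Write $L'\vec u = g\,\vec w$ with $\vec w$ a primitive integer vector, apply $(L')^{-1} = \tfrac{1}{d^2}\operatorname{adj}(L')$, and clear the denominator to obtain $d^2\,\vec u = g\,\operatorname{adj}(L')\vec w$. The vector $\operatorname{adj}(L')\vec w$ has integer entries, so taking gcd's of both sides — and using that $\vec u$ is primitive — gives $d^2 = g\cdot\gcd(\text{entries of }\operatorname{adj}(L')\vec w)$; in particular $g \mid d^2$, hence $g \le d^2$.

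Putting this together, $1/d(L) = 1/d \le g/d = |s/r| \le d^2/d = d = d(L)$, as claimed. The only point requiring care is the sign and absolute-value bookkeeping (the integers $r,s$ and the matrix $L$ may have negative entries, although $d \ge 1$ by definition), but this is routine; the real content is the primitivity-plus-adjugate argument that forces $g \mid d^2$.
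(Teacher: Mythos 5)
Your proof is correct. The paper's argument uses the same underlying ingredients but packages them differently: it first proves $|s/r|\ge 1/d(L)$ by observing that $d(L)L\vec u$ is an integer vector and invoking the primitivity of $\vec v$, and then obtains the other inequality by applying the identical argument to the symmetric relation $r\vec u = L^{-1}s\vec v$, using the fact that $d(L^{-1})=d(L)$ (which holds because, for a $2\times 2$ matrix of determinant one, passing to the inverse permutes the entries up to sign). You instead clear denominators once, identify the exact ratio $|s/r| = g/d$ with $g = \gcd(L'\vec u)$, and then sandwich $g$: the lower bound $g\ge 1$ corresponds to the paper's first step (primitivity of $\vec v$), and the upper bound $g\mid d^2$ via the adjugate corresponds to the paper's second step (primitivity of $\vec u$ and the $L^{-1}$ trick), since $(L')^{-1} = \operatorname{adj}(L')/d^2$ is exactly the $2\times 2$ fact the paper uses in the form $d(L^{-1})=d(L)$. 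The two proofs are therefore morally the same, but yours makes the symmetry between $\vec u$ and $\vec v$ more visible and records the slightly sharper divisibility $g\mid d^2$, which the paper neither states nor needs.
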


\begin{proof}
Suppose that $L r \vec u = s \vec v$, where $\vec u$ and $\vec v$ have coprime integer entries. We first show that $|s/r| \geq 1/d(L)$. Notice $L r \vec u = r/d(L) ( d(L) L \vec u) = s \vec v$, where $d(L) L \vec u$ must have integer entries. Since $\vec v$ has coprime entries, $r/d(L)$ must be a divisor of $s$. It follows that $|s| \geq |r|/d(L)$, and hence $|s/r| \geq 1/d(L)$.

Now we show that $d(L) \geq |s/r|$. We have that $r \vec u = L^{-1} s \vec v$. Note that $d(L^{-1})=d(L)$. It follows from above that $|r/s| \geq 1/d(L)$, and hence $d(L) \geq |s/r|$ as desired.
\end{proof}

\begin{lem}
\label{l:eigenslope}
Let $L \in SL_2(\mathbb Q)$ so that $|trace(L)| < 2/d(L)$ or $|trace(L)| > 2d(L)$.  Then $L$ has no rational eigenslopes.
\end{lem}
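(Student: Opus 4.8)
The plan is to argue by contraposition: I will show that if $L \in SL_2(\mathbb Q)$ has a rational eigenslope, then $2/d(L) \le |\mathrm{trace}(L)| \le 2d(L)$. So suppose $r \in \widehat{\mathbb Q}$ is a rational eigenslope with eigenvalue $\lambda$, i.e., $L\vec v = \lambda \vec v$ where $\vec v$ is a vector with coprime integer entries representing the slope $r$ (clearing denominators in a rational eigenvector, then dividing by the gcd). Since $\det L = 1$, the other eigenvalue is $1/\lambda$, and $\mathrm{trace}(L) = \lambda + 1/\lambda$. Because $L$ has rational entries, $\lambda$ is an algebraic number of degree at most $2$ over $\mathbb Q$, but actually since $r$ is rational, $\lambda$ is the ratio obtained from comparing $L\vec v$ and $\vec v$ coordinatewise, hence $\lambda \in \mathbb Q$. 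Write $\lambda = s/t$ in lowest terms with $t \ge 1$.

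The key step is to pin down the size of $\lambda$ using Lemma~\ref{lemDenominator}. We have $L\vec v = \lambda \vec v = (s/t)\vec v$, which we can rewrite as $L\,(t\,\vec v) = s\,\vec v$. Here $\vec v$ has coprime integer entries, and $t\vec v$ also has integer entries (its gcd is $t$, but that is harmless — we only need integrality of $t\vec v$ and coprimality of $\vec v$; alternatively apply the lemma with $\vec u = \vec v$, $r = t$, noting the lemma's hypothesis only asks $\vec u, \vec v$ to have coprime integer entries, so strictly I should take $\vec u = \vec v$ and observe $L\,t\vec v = s\vec v$ directly fits the form $Lr\vec u = s\vec v$ with $r = t$). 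Lemma~\ref{lemDenominator} then gives
\[
\frac{1}{d(L)} \le \left| \frac{s}{t} \right| \le d(L),
\]
that is, $1/d(L) \le |\lambda| \le d(L)$.

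From this bound on $|\lambda|$ I extract the bound on the trace. Since $\mathrm{trace}(L) = \lambda + \lambda^{-1}$ and $\lambda$ is a nonzero rational (nonzero because $\det L = 1 \ne 0$), both $|\lambda|$ and $|\lambda^{-1}|$ lie in the interval $[1/d(L), d(L)]$. For the upper bound, $|\mathrm{trace}(L)| \le |\lambda| + |\lambda^{-1}| \le d(L) + d(L) = 2d(L)$. For the lower bound, I use that $|\lambda| + |\lambda^{-1}| \ge 2$ always (AM–GM), but I need the sharper statement comparing with $2/d(L)$; here I note that if $|\lambda| \ge 1$ then $|\mathrm{trace}(L)| \ge |\lambda| \cdot (1 + \lambda^{-2}) \ge |\lambda| \ge 1 \ge 2/d(L)$ provided $d(L) \ge 2$, and if $d(L) = 1$ then $|\mathrm{trace}(L)| \ge 2 = 2/d(L)$ directly from AM–GM — wait, more carefully: since $|\lambda|\ge 1/d(L)$ and $|\lambda^{-1}| \ge 1/d(L)$ and they have the same sign (both equal the sign of $\lambda$), we get $|\mathrm{trace}(L)| = |\lambda + \lambda^{-1}| = |\lambda| + |\lambda^{-1}| \ge 2/d(L)$ when $\lambda > 0$, while when $\lambda < 0$, $|\mathrm{trace}(L)| = |\lambda| + |\lambda^{-1}| \ge 2 \ge 2/d(L)$ anyway. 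So in all cases $2/d(L) \le |\mathrm{trace}(L)| \le 2d(L)$, which is the contrapositive of the claim.

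The main obstacle, and the point requiring the most care, is the step that $\lambda$ is actually rational (not merely algebraic of degree $2$): this is where the hypothesis that the \emph{eigenslope} is rational gets used, as opposed to just $L$ having rational entries. Once rationality of $\lambda$ is in hand, Lemma~\ref{lemDenominator} does the quantitative work and the rest is elementary estimation; a secondary bookkeeping point is making sure the signs in $|\lambda + \lambda^{-1}| = |\lambda| + |\lambda^{-1}|$ are handled, since $\lambda$ and $\lambda^{-1}$ always share a sign.
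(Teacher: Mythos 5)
Your proof is correct and follows essentially the same route as the paper: prove the contrapositive by passing to a coprime-integer eigenvector, conclude the eigenvalue $\lambda$ is rational, apply Lemma~\ref{lemDenominator} to get $1/d(L) \le |\lambda| \le d(L)$, and then use that $\lambda$ and $\lambda^{-1}$ share a sign to bound $|\mathrm{trace}(L)| = |\lambda| + |\lambda^{-1}|$ in $[2/d(L), 2d(L)]$. The case split at the end is unnecessary (the sign argument already covers both signs of $\lambda$ uniformly), but this is a presentational quibble, not a gap.
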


\begin{proof}
We prove the contrapositive. Suppose that $L$ has a rational eigenslope written $r/s$, where $r$ and $s$ are coprime integers.  Then $\vec u = \left[\begin{array}{c}r \\ s\end{array}\right]$ is an eigenvector for $L$.  Hence $L \vec u = q \vec u$ where $q$ is the eigenvalue corresponding to $\vec u$. Since $L \in SL_2(\mathbb Q)$ it follows that $q \in \mathbb Q$.  By Lemma \ref{lemDenominator}, we have $d(L) \geq |q| \geq 1/d(L)$. The eigenvalues of $L$ are $q$ and $1/q$, either both positive or both negative. In particular, $|q + 1/q| = |q| + |1/q|$. Moreover, $d(L) \geq |q| \geq 1/d(L)$ implies that $1/d(L) \leq | 1/q| \leq d(L)$. Hence $2d(L) \geq |q| + |1/q| =  |q + 1/q| = |trace(L)|  \geq 2/d(L).$

\end{proof}

\begin{lem}
\label{fixed_point_lem}
Let $J \in SL_2(\mathbb Z)$ and $K \in SL_2(\mathbb Q)$ so that $trace(J)>2$.  Then there exists an $N \in \mathbb N$ so that for all $n \geq N$, either $|trace(J^nK)| < 2/d(J^nK)$ or $|trace(J^nK)| > 2 d(J^nK)$.  In particular, $J^nK$ has no rational eigenslopes.
\end{lem}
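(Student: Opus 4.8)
The plan is to reduce the statement to two facts about the sequence $J^nK \in SL_2(\mathbb Q)$: that its denominators stay bounded, and that its traces either blow up or shrink to zero. Combined with Lemma~\ref{l:eigenslope}, these give the ``no rational eigenslopes'' conclusion at once.

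First I would bound the denominator. Since $J \in SL_2(\mathbb Z)$, every power $J^n$ is an integer matrix, so $d(K)\,J^nK = J^n\big(d(K)K\big)$ is an integer matrix; hence $d(J^nK)$ divides $d(K)$, and in particular $d(J^nK) \le d(K) =: D$ for all $n$. This is the one place where the integrality of $J$ (not merely its rationality) is essential, and it is precisely what lets me use the bound in Lemma~\ref{l:eigenslope} uniformly in $n$; so I regard it as the crux, even though it is short.

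Next I would analyze $\mathrm{trace}(J^nK)$. Since $\mathrm{trace}(J) > 2$ and $\det J = 1$, the matrix $J$ has real eigenvalues $\lambda > 1 > \lambda^{-1} > 0$; diagonalizing $J$ over $\mathbb R$ (or, equivalently, writing $J^n = p_nJ - p_{n-1}I$ from Cayley--Hamilton with $p_n = (\lambda^n-\lambda^{-n})/(\lambda-\lambda^{-1})$) shows that $\mathrm{trace}(J^nK) = c_1\lambda^n + c_2\lambda^{-n}$ for some fixed real numbers $c_1, c_2$ determined by $J$ and $K$. As $n \to \infty$ we have $\lambda^{-n} \to 0$ and $\lambda^n \to \infty$, so there are exactly two cases: either $c_1 \ne 0$, in which case $|\mathrm{trace}(J^nK)| \to \infty$, or $c_1 = 0$, in which case $\mathrm{trace}(J^nK) = c_2\lambda^{-n} \to 0$.

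Finally I would conclude. In the first case, choose $N$ so that $|\mathrm{trace}(J^nK)| > 2D$ for all $n \ge N$; then $|\mathrm{trace}(J^nK)| > 2D \ge 2\,d(J^nK)$. In the second case, choose $N$ so that $|\mathrm{trace}(J^nK)| < 2/D$ for all $n \ge N$; then $|\mathrm{trace}(J^nK)| < 2/D \le 2/d(J^nK)$, the last inequality because $d(J^nK) \le D$. In either case the hypothesis of Lemma~\ref{l:eigenslope} holds, so $J^nK$ has no rational eigenslopes. Beyond the denominator bound noted above, no step looks difficult — the only thing to watch is that $2/d(J^nK) \ge 2/D$ points the right way in the second case.
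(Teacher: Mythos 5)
Your proof is correct and follows the same approach as the paper's: conjugate $J$ to diagonal form to see $\mathrm{trace}(J^nK) = c_1\lambda^n + c_2\lambda^{-n}$, split on whether $c_1 = 0$, and apply Lemma~\ref{l:eigenslope}. Your explicit bound $d(J^nK) \le d(K)$ is a welcome clarification: the paper's choice $k = 1/d(J^nK)$ (resp.\ $k = d(J^nK)$) is $n$-dependent as written, and it is precisely the uniform bound on the denominator coming from $J$ being an integer matrix that makes that choice legitimate --- a point the paper leaves implicit.
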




\begin{proof}
Let $t_n$ be the trace of $J^nK$. Since $|trace(J)| > 2$, $J$ is conjugate to a matrix $W$ in $SL_2(\mathbb R)$ of the form:

$$W = \left[ \begin{array}{cc} w & 0 \\ 0 & \frac{1}{w}\end{array}\right].$$

Without loss of generality, assume $w >1$ (the cases that $w <1$ or $w$ is negative are similar). Assume that $C$ is the conjugation matrix for $J$, i.e.~$C^{-1} J C = W$. Now set $Z = C^{-1} K C$. Thus $Z$ is a fixed matrix in $SL_2(\mathbb R)$:

$$Z = \left[ \begin{array}{cc} z_1 & z_2 \\ z_3 & z_4 \end{array}\right].$$

Since trace is invariant under conjugation, we have

$$t_n = trace(J^nK) = trace(C^{-1} J^n C \cdot C^{-1} K C)$$

$$= trace(C^{-1} J C \cdots C^{-1} J C \cdot C^{-1} K C) = trace(W^n Z).$$

\noindent Now, since $$W^n = \left[ \begin{array}{cc} w^n & 0 \\ 0 & \frac{1}{w^n}\end{array}\right],$$ it follows that $$t_n = trace(W^nZ) = trace\left( \left[ \begin{array}{cc} w^n & 0 \\ 0 & \frac{1}{w^n}\end{array}\right] \left[ \begin{array}{cc} z_1 & z_2 \\ z_3 & z_4 \end{array}\right]\right)$$ $$= trace\left( \left[ \begin{array}{cc} z_1w^n & z_2w^n \\ \frac{z_3}{w^n} & \frac{z_4}{w^n} \end{array}\right]\right) = z_1w^n + \frac{z_4}{w^n}.$$
\medskip

Let $k$ be a positive constant. First, assume that $z_1 = 0$. If $z_4 = 0$ as well, then $t_n = 0$ for any $n$; otherwise a high enough power of $N$ gives that for $n \geq N$, $|t_n| = \left| \frac{z_4}{w^n}\right| < 2k$. In either case, the trace is certainly less than $2k$.

Now assume that $z_1 \neq 0$. Then clearly, for large enough $n$, $|t_n| = \left|z_1w^n + \frac{z_4}{w^n}\right|$ can be taken to be greater than $2k$. The lemma follows by letting $k = 1/d(J^nK)$ in the former case and $k = d(J^nK)$ in the latter, and finally applying Lemma~\ref{l:eigenslope}. 




\end{proof}


We now prove Theorem~\ref{main_thm2}.

\begin{proof}

Let $M$ be a 3-manifold and let $T_1$ and $T_2$ be two torus components of $\partial M$. Let $\psi \colon T_1 \to T_2$ and $\sigma \colon T_2 \to T_2$ be homeomorphisms. We can use these maps to glue $M$ along $T_1$ and $T_2$ and obtain the manifold $M_{\sigma^n \psi}$.  Assume that $\sigma$ is Anosov.

For each $i = 1, 2$, fix a homology basis for $T_i$ as in Section 2. As homeomorphisms induce maps between homology, for convenience we can identify the homeomorphisms $\psi$ and $\sigma$ with their induced maps so that $\psi \colon H_1(T_1) \to H_1(T_2)$ and $\sigma \colon H_1(T_2) \to H_1(T_2)$ are regarded as elements of $SL_2(\mathbb Z)$, where $\sigma$ is hyperbolic (in particular, $|trace(\sigma) | > 2$). 

Fix a positive integer $c$, and let $\mathcal C$ be a typed compatibility class of complexity at most $c$. Let $\Phi_{\mathcal C}$ be the element in $SL_2(\mathbb Q)$ given by Lemma~\ref{SL2Qlemma}. The element $\sigma^n \psi \Phi_{\mathcal C}$ is thus in $SL_2(\mathbb Q)$, and fixes a slope on $T_2$ if and only if $\sigma^n \psi \Phi_{\mathcal C}$ has an eigenvector $\vec v$ in $H_1(T_2)$. In other words, $\sigma^n \psi \Phi_{\mathcal C}$ has distance zero if and only if $\sigma^n \psi \Phi_{\mathcal C}$ has a rational eigenslope. 

By taking $J = \sigma$ and $K = \psi \Phi_{\mathcal C}$ and applying Lemma~\ref{fixed_point_lem}, we obtain an integer $N_{\mathcal C}$ such that for all $n \geq N_{\mathcal C}$, $\sigma^n \psi \Phi_{\mathcal C}$ has no rational eigenslopes. Hence, $\sigma^n \psi \Phi_{\mathcal C}$ cannot have distance zero. Following Remark~\ref{r:finite}, there are a finite number of typed compatibility classes of complexity at most $c$. By taking $N$ to be the maximum over all integers $N_{\mathcal C}$ obtained as above, we have that $n \geq N$ implies that $\sigma^n \psi$ cannot have $c$-distance zero.

\end{proof}

\bibliographystyle{alpha}
\bibliography{Toroidal_self_gluings}

\end{document}